\newtheorem{theorem}{Theorem}[section] \newtheorem{lemma}[theorem]{Lemma}
\newtheorem{proposition}[theorem]{Proposition}
\newtheorem{corollary}[theorem]{Corollary}
\theoremstyle{definition}
\newtheorem{definition}[theorem]{Definition}
\newtheorem{example}[theorem]{Example}
\theoremstyle{remark}
\newtheorem{remark}[theorem]{Remark}
\numberwithin{equation}{section}
\title[Extreme values of the derivative of Blaschke products]
{Extreme values of the derivative of Blaschke products and hypergeometric polynomials}
\author{Leonid V. Kovalev}
\address{215 Carnegie, Mathematics Department, Syracuse University, Syracuse, NY 13244, USA}
\email{lvkovale@syr.edu}
\thanks{L.V.K. supported by the National Science Foundation grant DMS-1764266.}
\author{Xuerui Yang}
\address{215 Carnegie, Mathematics Department, Syracuse University, Syracuse, NY 13244, USA}
\email{xyang20@syr.edu}
\thanks{X.Y. supported by Young Research Fellow award from Syracuse University.}
\subjclass[2010]{Primary 30J10; Secondary 33C05, 33C45} 
\keywords{Finite Blaschke product, hypergeometric function, hypergeometric polynomial, Hardy space, Poisson kernel}
\begin{document}
\baselineskip6.5mm

\begin{abstract} A finite Blaschke product, restricted to the unit circle, is a smooth covering map. The maximum and minimum values of the derivative of this map reflect the geometry of the Blaschke product. We identify two classes of extremal Blaschke products: those that maximize the difference between the maximum and minimum of the derivative, and those that minimize it. Both classes turn out to have the same algebraic structure, being the quotient of two hypergeometric polynomials.
\end{abstract}

\maketitle

\section{Introduction} 

A finite Blaschke product of degree $n$ is a rational function of the form
\[
B(z) = \alpha \prod_{k=1}^n\frac{z-a_k}{1-\overline{a_k}z}
\]
where $a_1,\dots, a_n\in \mathbb{D}$ and $\alpha\in \mathbb T$. Here and below we use the notation $\mathbb D = \{z\in \mathbb C\colon |z|<1\}$ and  $\mathbb T = \{z\in \mathbb C\colon |z|=1\}$. All Blaschke products in this paper are finite. Such products can be characterized as the rational functions that map $\mathbb T$ onto itself and have no poles in $\mathbb D$. Their many connections in complex analysis and operator theory are described in the book~\cite{GarciaMashreghiRoss}. In this paper we focus on the behavior of a Blaschke product on the unit circle $\mathbb T$, and more specifically on the range of its derivative. 

\begin{definition}\label{def:extremal_derivative} Given a finite Blaschke product $B$, let
\begin{equation}
M(B) = \sup_{|z|=1} |B'(z)|, \quad m(B) = \inf_{|z|=1} |B'(z)| .
\end{equation}
\end{definition}

By the maximum principle, $M(B)$ could be equivalently defined as the Hardy space norm $\|B'\|_{H^\infty}$, i.e., the supremum of $|B'|$ on the unit disk $\mathbb D$. The integral Hardy norms of $B'$ were studied in~\cite{GluchoffHartmann}. The maximum and minimum of $|B'|$ on the unit circle are of interest for several reasons.  
\begin{itemize}
    \item If $m(B)>1$, then the restriction of $B$ to $\mathbb T$ is an expanding map, which has several consequences for the dynamics of $B$~\cite{Martin}.
    \item If $m(B) < 2$, then $B$ has nonempty ``thin part'' $\{z\in \mathbb T\colon |B'(z)| < 2\}$; the restriction of $B$ to its thin part extends to a homeomorphism of $\mathbb T$~\cite{McMullen}.
    \item For a Blaschke product of degree $n$, the inequality $m(B)\ge n-1$ is equivalent to the quotient $B(z)/z^{n-1}$ being a homeomorphism of $\mathbb T$. Similarly, $M(B)\le n+1$ is equivalent to $z^{n+1}/B(z)$ being a homeomorphism of $\mathbb T$. These two constructions provide all circle homeomorphisms with a Fourier series that terminates in one direction~\cite{KY}. 
\end{itemize}

An obvious property of the extreme values of $|B'|$ is that $m(B)\le n \le M(B)$, where $n$ is the degree of $B$. Indeed, the restriction of $B$ to $\mathbb T$ is an $n$-fold cover, which implies that the mean value of $|B'|$ on $\mathbb T$ is equal to $n$. 
However, the inequality $m(B)\le n \le M(B)$ does not completely describe the possible pairs $(M(B), m(B))$ for an $n$-fold Blaschke product $B$. Such a description is provided by the following theorem, the second part of which is our main result.  

\begin{theorem}\label{thm:main} (i) For every Blaschke product $B$ of degree $n$ the numbers $M=M(B)$ and $m=m(B)$ satisfy  
\begin{equation}\label{thm:main-ineq}
\frac{n}{M-n+1} \le  m \le  n-1 + \frac{n}{M}      
\end{equation}
and
\begin{equation}\label{thm:main-ineq-trivial}
0<m\le n \le M. 
\end{equation}

(ii) Conversely, for any triple $(n, m, M)\in \mathbb N\times (0, \infty) \times (0, \infty)$ that satisfies~\eqref{thm:main-ineq} and~\eqref{thm:main-ineq-trivial}, there exists a Blaschke product $B$ of degree $n$ for which $M(B) = M$ and $m(B) = m$.  
\end{theorem}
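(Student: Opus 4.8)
The plan is to use part (i) for the easy containment and reduce part (ii) to a surjectivity statement. Write $\Omega_n$ for the region in the $(M,m)$-plane cut out by \eqref{thm:main-ineq} and \eqref{thm:main-ineq-trivial}, and let $\Phi$ denote the map sending the configuration $(a_1,\dots,a_n,\alpha)$ of a degree-$n$ Blaschke product $B$ to the pair $(M(B),m(B))$; this map is continuous because $M$ and $m$ are the sup and inf of $|B'|$ over the compact circle $\mathbb T$. Part (i) says exactly that $\Phi$ takes values in $\Omega_n$. For fixed $n$ the region $\Omega_n$ is a curvilinear horn: it is pinched to the single point $(n,n)$, attained by $B(z)=z^n$, and opens to the right as $M\to\infty$, bounded above by $\Gamma_+=\{m=n-1+n/M\}$ and below by $\Gamma_-=\{m=n/(M-n+1)\}$. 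Thus everything comes down to proving that $\Phi$ is onto $\Omega_n$.

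First I would realize the two boundary curves by the two classes of extremal Blaschke products promised in the abstract. For a prescribed $M$, minimizing the gap $M-m$ pushes $m$ up to $\Gamma_+$, while maximizing it pushes $m$ down to $\Gamma_-$; I would take as established, from the extremal analysis preceding the theorem, that each class is a one-parameter family of quotients of hypergeometric polynomials, and that as the parameter $t$ runs over $[0,1)$ the points $\Phi(B_\pm^{(t)})$ sweep $\Gamma_\pm$ continuously from the corner $(n,n)$ at $t=0$ out to $M\to\infty$. As a consistency check, near the corner the upper family lives in the regime $m>n-1$, where by the third bullet of the introduction $B(z)/z^{n-1}$ is a homeomorphism of $\mathbb T$. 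The facts I need from this analysis are that equality in the appropriate half of \eqref{thm:main-ineq} holds exactly along these families and that the two parametrizations are continuous and proper (escaping to $M=\infty$).

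To capture the interior I would run a winding-number argument. Fix an interior target $(M^\ast,m^\ast)\in\Omega_n$ and pick $R>M^\ast$. The truncation $\Omega_n^{R}=\Omega_n\cap\{M\le R\}$ is a closed topological disk whose boundary is a curvilinear triangle with vertices $(n,n)$, $(R,\,n-1+n/R)$, $(R,\,n/(R-n+1))$ and three sides: the arc of $\Gamma_+$ out to $M=R$, the vertical segment $\{M=R\}$, and the arc of $\Gamma_-$ back to the corner. I would build a continuous family of degree-$n$ Blaschke products parametrized by a triangle $Q$ so that $\Phi$ carries the three edges of $\partial Q$ onto these three sides—the first and third edges being supplied by the truncated families $B_+^{(t)}$ and $B_-^{(t)}$, and the three vertices going to the three vertices above (the corner via $z^n$). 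Because $\Phi(Q)\subseteq\Omega_n$ by part (i), while the loop $\Phi|_{\partial Q}$ traverses $\partial\Omega_n^{R}$ once and therefore has winding number $1$ about the interior point $(M^\ast,m^\ast)$, the standard fact that a planar loop of nonzero winding number about a point cannot be contracted in the complement of that point forces $(M^\ast,m^\ast)\in\Phi(Q)$. Letting the target range over the interior, together with the boundary families for points on $\Gamma_\pm$ and $z^n$ for the corner, shows $\Phi$ is onto $\Omega_n$, which is part (ii).

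The main obstacle is the third edge of $\partial Q$: a continuous family that realizes the far segment $\{M=R\}$, i.e.\ one that deforms an upper-extremal configuration into a lower-extremal one while keeping $M$ pinned near the large value $R$. I would produce it by a controlled motion of the zeros—driving all of them toward $\mathbb T$ so that $M$ stays of order $R$, and then redistributing their arguments to interpolate between the $\Gamma_+$ and $\Gamma_-$ configurations—while tracking $(M(B),m(B))$ throughout; the explicit hypergeometric description of the two endpoints is what pins down where this connecting path must begin and end. The delicate point is verifying that $\Phi|_{\partial Q}$ is a genuine degree-one loop, that is, that the connecting path stays inside $\Omega_n$ and does not wind backward about $(M^\ast,m^\ast)$; confirming this requires the monotonicity of $M$ and $m$ along the extremal families, which again I would draw from the preceding extremal analysis.
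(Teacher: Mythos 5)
Your reduction of part (ii) to a surjectivity statement is reasonable, and the topological core of your winding-number argument is sound as topology: a loop of winding number one about $(M^\ast,m^\ast)$ cannot be null-homotopic in the complement of that point, yet it would have to be if the point were missed, since the parameter triangle $Q$ can be filled inside the convex configuration space $\mathbb{D}^n$ (the unimodular factor is irrelevant to $|B'|$). But the proposal has three genuine gaps. First, part (i) is never proved: you use it as input, but it is part of the statement; the paper derives it from a residue identity (Lemma~\ref{lem:ellipses} applied to $zB(z)$), which gives $\sum_{j=0}^{n}(|B'(z_j)|+1)^{-1}=1$ over the solutions of $zB(z)=\lambda$, and both inequalities in~\eqref{thm:main-ineq} follow by bounding all but one term of this sum. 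Second, and most seriously, the boundary families $B_\pm^{(t)}$ realizing the curves $\Gamma_\pm$ are assumed rather than constructed: there is no ``extremal analysis preceding the theorem'' to quote --- the theorem is stated in the introduction, and the existence of Blaschke products attaining equality in~\eqref{thm:main-ineq} is precisely what Sections~\ref{sec:hyper} and~\ref{sec:hyper-Blaschke} of the paper exist to establish (the Wronskian identity for contiguous hypergeometric functions, Lemma~\ref{lem:identity}, and the evaluation of $M$ and $m$ for hypergeometric Blaschke products, Theorem~\ref{thm:extreme-values}). Citing the abstract's promise is circular: those products are built in order to prove this very theorem. Third, even granting the two families, your loop is missing its far edge: you yourself flag the path joining an upper-extremal to a lower-extremal configuration while keeping $M$ large as ``the main obstacle,'' and you offer only a heuristic for it; without that edge there is no loop, hence no degree argument.

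It is instructive to see how the paper disposes of exactly this obstacle. Instead of a two-dimensional degree argument it uses a one-dimensional one. Lemma~\ref{lem:monotonicity} shows, via the semigroup property of the Poisson kernel, that radially scaling all zeros $a_k\mapsto\lambda a_k$ makes $M(B_\lambda)$ strictly increasing and $m(B_\lambda)$ strictly decreasing in $\lambda$, with $M\to n$ as $\lambda\to 0^+$ and $M\to\infty$ as a zero approaches $\mathbb{T}$. Hence along any path of zero-configurations in $(\mathbb{D}\setminus\{0\})^n$ joining the two extremal products, one can choose for each $t$ the \emph{unique} scale $\lambda(t)$ that pins $M(B_{t,\lambda(t)})$ exactly at the target $M$; continuity of $t\mapsto\lambda(t)$ follows from the strict monotonicity, and the intermediate value theorem applied to $t\mapsto m(B_{t,\lambda(t)})$, which runs from $n/(M-n+1)$ to $n-1+n/M$, produces the target $m$. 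This single monotonicity device replaces your whole triangle-and-winding construction and never requires a connecting path along which $M$ stays large; it is also the ingredient you would need if you wanted to build your far edge rigorously.
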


To further understand the relation between $M(B)$ and $m(B)$, we consider the Blaschke products for which equality is attained in~\eqref{thm:main-ineq}. 

\begin{definition}\label{def:extremal-Bp} A Blaschke product $B$ is \emph{extremal} if it attains equality in either part of~\eqref{thm:main-ineq}: that is, either 
\begin{equation}\label{def:extremal-1}
m(B) = \frac{n}{M(B) - n + 1}    
\end{equation}
or 
\begin{equation}\label{def:extremal-2}
m(B) = n-1 + \frac{n}{M(B)}. 
\end{equation}
We sometimes distinguish between ~\eqref{def:extremal-1} and~\eqref{def:extremal-2} as extremal products \emph{of the first kind} and \emph{of the second kind}, respectively.   
\end{definition}

Extremal Blaschke products of the first kind minimize $m(B)$ for a given value of $M(B)$, which means that $|B'|$ attains a wide range of values on $\mathbb T$. In contrast, extremal products of the second kind  maximize $m(B)$ for a given $M(B)$, which means that $|B'|$ stays as close to constant as possible. Figure~\ref{fig:deg15der} in Section~\ref{sec:zeros} illustrates this difference on the example $n=15$ and $M = 20$. But despite these contrasting features, Theorem~\ref{thm:uniqueness} will show that both kinds of extremal Blaschke products have the same algebraic structure.  

Our uniqueness result for extremal Blaschke product describes them in terms of hypergeometric functions. In its statement we  use the notation
\begin{equation}\label{nu-def}
\nu(B) = \begin{cases} M(B) - n,\quad \text{if $B$ is of the first kind} \\
m(B) - n,\quad \text{if $B$ is of the second kind.} \\
\end{cases}
\end{equation}
Equations~\eqref{def:extremal-1}--\eqref{def:extremal-2} show that 
\begin{equation}\label{nu-prop}
\frac{n}{\nu(B)+1} = \begin{cases} m(B),\quad \text{if $B$ is of the first kind} \\
M(B),\quad \text{if $B$ is of the second kind.} \\
\end{cases}
\end{equation}
Hence we always have $\nu(B) > -1$, see~\eqref{def:extremal-2}. The extremal  products with $\nu(B)>0$ are of the first kind, and those with $-1<\nu(B)<0$ are of the second kind. 

\begin{theorem}\label{thm:uniqueness} Suppose $B$ is an extremal Blaschke product of degree $n$. Let $\nu$ be as in~\eqref{nu-def}. If $\nu=0$, then 
$B(z) = \alpha z^n$ for some unimodular constant $\alpha$. If $\nu\ne 0$, then there exist unimodular constants $\alpha, \beta$ such that 
\begin{equation}\label{uniqueness-formula}
B(z) = \alpha \frac{p(\beta z)}{q(\beta z)}    
\end{equation}
where $p$ is the hypergeometric polynomial
\begin{equation}\label{hyper-poly}
p(z) = F(-n, \nu +2;  -n -\nu + 1; z)
\end{equation}
(see Definition~\ref{def:hyper}) and $q$ is the conjugate-reciprocal polynomial of $p$, that is $q(z) = z^n \overline{p(1/\bar z)}$.  
\end{theorem}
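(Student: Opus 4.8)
The plan is to dispose of the degenerate case $\nu=0$ by hand and then, for $\nu\ne 0$, reduce the statement to showing that the numerator polynomial of $B$ solves the Gauss hypergeometric equation with the parameters displayed in \eqref{hyper-poly}. When $\nu=0$ both forms of extremality force $m(B)=M(B)=n$: for the first kind \eqref{def:extremal-1} gives $m=n/(M-n+1)=n$ once $M=n$, and for the second kind \eqref{def:extremal-2} gives $M=n$ once $m=n$. Hence $\phi:=|B'|\equiv n$ on $\mathbb T$. Using that on $\mathbb T$ one has $\phi = zB'/B = \sum_{k=1}^n P_{a_k}$, a sum of Poisson kernels, the harmonic extension of $\phi$ to $\mathbb D$ is $\mathrm{Re}\,F$ with $F(w)=\sum_k (1+\overline{a_k}w)/(1-\overline{a_k}w)$ and $F(0)=n$; an analytic $F$ with positive real part cannot have constant real part unless it is constant, so $\phi\equiv n$ forces every $a_k=0$ and $B=\alpha z^n$.

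For $\nu\ne 0$ I write $B=\alpha p/q$ with $p(z)=\prod_k(z-a_k)$ and $q=\tilde p$ its conjugate-reciprocal, so that on $\mathbb T$ one has $\phi=zW/(pq)$ with $W:=p'q-pq'$, while $pq=z^n|p|^2$ there. The two inequalities in \eqref{thm:main-ineq} say exactly that the real expressions $z^{-n}(Mpq-zW)$ and $z^{-n}(zW-m\,pq)$ are nonnegative on $\mathbb T$. By the Fej\'er--Riesz theorem each factors as a polynomial times its conjugate-reciprocal, say $Mpq-zW=g\tilde g$ and $zW-m\,pq=h\tilde h$, and subtracting yields the identity $g\tilde g+h\tilde h=(M-m)\,pq$. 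In both kinds the pair of attained extreme values of $\phi$ is $\{\,n+\nu,\ n/(\nu+1)\,\}$ (this is the content of \eqref{nu-def}--\eqref{nu-prop}), these being $M$ and $m$ in the order fixed by the kind.

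The heart of the argument is the equality analysis: I will show that attaining equality in \eqref{thm:main-ineq}, rather than mere membership in the class of degree-$n$ Blaschke products, collapses the generic zeros of the defect polynomials onto $p$ and $q$, so that $g$ and $h$ are, up to constants and a single rotation $z\mapsto\beta z$, determined by $p$ together with one free boundary point, which I normalize to $z=1$. Feeding this rigidity back into $\phi=zW/(pq)$ converts the factorization identity into a second-order linear relation among $p,p',p''$ whose coefficients are polynomials of degrees $2,1,0$, that is, the Gauss hypergeometric equation. Matching the leading coefficient forces the first parameter to be $-n$, so the polynomial solution has degree $n$, while the exponents at $0$ and at the normalized point, together with the constraint $q=\tilde p$, pin the remaining parameters to $\nu+2$ and $1-n-\nu$; uniqueness of the polynomial solution of \eqref{hyper-poly} then gives \eqref{uniqueness-formula}, with the unimodular constants $\alpha$ and the rotation $\beta$ absorbing the normalization.

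The main obstacle is precisely this rigidity step. I expect to obtain it by revisiting the proof of Theorem~\ref{thm:main}(i) and tracking when each estimate is sharp: sharpness should localize the extreme values of $\phi$ to a single maximum and a single minimum and fix the order of contact there, and a zero-counting argument on $g\tilde g$ and $h\tilde h$ should show that no further freedom survives. Carrying the two kinds through uniformly in $\nu$, and keeping careful track of the sign changes that distinguish \eqref{def:extremal-1} from \eqref{def:extremal-2}, will be the delicate bookkeeping; everything downstream is the classical identification of a degree-$(2,1,0)$ linear ODE with the hypergeometric equation.
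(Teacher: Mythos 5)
Your high-level strategy coincides with the paper's: dispose of $\nu=0$ directly (your Poisson-kernel/harmonic-extension argument there is correct, indeed more detailed than the paper's), then show that the numerator polynomial of $B$ satisfies the hypergeometric differential equation with parameters $(-n,\nu+2;1-n-\nu)$. But the two steps that constitute the actual proof are missing, and the one you call ``the heart of the argument'' is also mispredicted. The rigidity that extremality actually yields is \emph{not} ``a single maximum and a single minimum'': it is a statement about a single fiber of $zB(z)$. Revisiting the proof of Theorem~\ref{thm:main}(i), one picks $z_0\in\mathbb T$ with $|B'(z_0)|=n/(\nu+1)$, sets $\lambda=z_0B(z_0)$, and applies the residue identity~\eqref{ellipses2} to the $n+1$ solutions of $zB(z)=\lambda$: equality in~\eqref{thm:main-ineq} forces $|B'(z_k)|=n+\nu$ at \emph{all} $n$ of the remaining solutions $z_1,\dots,z_n$ (so for the first kind the maximum is attained at least $n$ times, not once). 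This fiber structure is what makes everything downstream work: writing $B=p/q$ and $\psi=(n+\nu)pq-z(p'q-q'p)$, the points $z_1,\dots,z_n$ are even-order zeros of the degree-$2n$ polynomial $\psi$, hence $\psi=C_1r^2$ with $r=\prod_k(z-z_k)$; and the relation $q=zp$ on the fiber, combined with the vanishing of $\psi$ and of $\psi'$ at the $z_k$, yields two polynomial identities from which $q$ can be eliminated, producing the ODE. Your Fej\'er--Riesz factorizations $g\tilde g=Mpq-zW$ and $h\tilde h=zW-mpq$, and the identity $g\tilde g+h\tilde h=(M-m)pq$, use only the definitions of $M$ as a supremum and $m$ as an infimum — they hold for \emph{every} degree-$n$ Blaschke product, extremal or not — so no zero-counting on them alone can produce rigidity; the fiber information from~\eqref{ellipses2} is the missing ingredient, and without it the ``second-order linear relation among $p,p',p''$'' is never actually derived.

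There is a second, smaller but genuine, gap at your endgame: the appeal to ``uniqueness of the polynomial solution'' of the hypergeometric equation fails in exactly the cases the paper flags. When $\nu$ is a positive integer, the parameter $c=1-n-\nu$ is a negative integer, where the classical classification of solutions degenerates ($F(a,b;c;\cdot)$ is not defined for generic $a$, and logarithmic solutions appear), so exponent-matching at $z=0$ does not pin down $p$. The paper circumvents this by bypassing the classification entirely: substituting $p=\sum_{k=0}^n c_kz^k$ into the ODE gives the recursion $c_{k+1}=\frac{(k-n)(k+\nu+2)}{(k+1)(k-n-\nu+1)}c_k$, whose denominator is nonzero for $0\le k\le n-1$ precisely because $\nu\in(-1,0)\cup(0,\infty)$; this identifies $p=c_0F(-n,\nu+2;-n-\nu+1;z)$ uniformly in $\nu$, and the normalization $B(1)=1$ makes $c_0$ real so that it cancels between $p$ and its conjugate-reciprocal $q$. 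Any completed version of your argument needs either this coefficient-level computation or a separate treatment of the degenerate parameter values.
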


\begin{remark} The existence of extremal Blaschke products (Theorem~\ref{thm:main}) implies that the product defined in~\eqref{uniqueness-formula} is indeed extremal. A more detailed description of the Blaschke product~\eqref{uniqueness-formula} is provided by Theorem~\ref{thm:extreme-values}. 
\end{remark}

\begin{remark} Since the polynomial~\eqref{hyper-poly} has real coefficients, one could write $q(z) = z^n p(1/z)$ in Theorem~\ref{thm:uniqueness}, but it is more natural to think of $q$ as conjugate-reciprocal to $p$. 
\end{remark}

Theorem~\ref{thm:main} has a consequence for rational circle homeomorphisms and diffeomorphisms. Recall from~\cite{KY} that the mapping $z\mapsto B(z)/z^{n-1}$ is a homeomorphism of $\mathbb T$ if and only if $m(B)\ge n-1$, and it is a diffeomorphism if and only if $m(B) > n-1$. Similarly for $z^{n+1}/B(z)$: it is a homeomorphism iff $M(B)\le n+1$ and a diffeomorphism iff $M(B) < n+1$.  

\begin{corollary}\label{cor:deg2}  If $B$ is a Blaschke product of degree $2$ and the restriction of $z^3/B(z)$ to $\mathbb T$ is a  homeomorphism, then $B(z)/z$ is also a homeomorphism of $\mathbb T$. Moreover, if  $z^3/B(z)$ is a  circle diffeomorphism, then $B(z)/z$ is a diffeomorphism.
\end{corollary}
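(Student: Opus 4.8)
The plan is to convert both the hypothesis and the conclusion into inequalities for $M(B)$ and $m(B)$, and then read off the result from the first inequality in Theorem~\ref{thm:main}. Since $B$ has degree $n=2$, the characterizations recalled just before the corollary specialize as follows: the map $z^3/B(z)$ (which is $z^{n+1}/B(z)$) restricts to a homeomorphism of $\mathbb T$ exactly when $M(B)\le 3$, and to a diffeomorphism exactly when $M(B)<3$; likewise $B(z)/z$ (which is $B(z)/z^{n-1}$) is a homeomorphism exactly when $m(B)\ge 1$, and a diffeomorphism exactly when $m(B)>1$. Thus the corollary is equivalent to the two implications $M(B)\le 3 \Rightarrow m(B)\ge 1$ and $M(B)<3 \Rightarrow m(B)>1$.

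For the main step I would invoke the left-hand inequality of~\eqref{thm:main-ineq} with $n=2$, namely
\[
m(B) \ge \frac{2}{M(B)-1}.
\]
This is meaningful because $M(B)\ge n=2>1$ by~\eqref{thm:main-ineq-trivial}, so the denominator is positive. If $M(B)\le 3$, then $M(B)-1\le 2$, hence $2/(M(B)-1)\ge 1$ and therefore $m(B)\ge 1$. If instead $M(B)<3$, then $M(B)-1<2$, so the same bound gives $m(B)\ge 2/(M(B)-1)>1$. These are precisely the two implications identified above, which completes the argument.

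The entire substance of this corollary lies in Theorem~\ref{thm:main}, so I expect no genuine obstacle beyond the bookkeeping of the equivalences. The one point worth flagging is \emph{why} degree two is special: for general $n$, the lower bound $n/(M-n+1)$ evaluated at the homeomorphism threshold $M=n+1$ equals $n/2$, whereas the homeomorphism threshold for $B(z)/z^{n-1}$ is $m=n-1$. The implication therefore goes through exactly when $n/2\ge n-1$, i.e.\ when $n\le 2$, which is why the statement is confined to the quadratic case.
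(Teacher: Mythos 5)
Your proposal is correct and follows the same route as the paper: both reduce the statement to the equivalences $M(B)\le 3 \Leftrightarrow$ homeomorphism, $m(B)\ge 1 \Leftrightarrow$ homeomorphism (and their strict versions), and then apply the left-hand inequality of~\eqref{thm:main-ineq} with $n=2$ to get $m\ge 2/(M-1)$. The paper's proof is just a terser version of yours; your closing observation about why $n=2$ is the threshold degree matches the remark the authors make immediately after the corollary.
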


\begin{proof} From~\eqref{thm:main-ineq} with $n=2$ we obtain $m\ge 2/(M-1)$. Thus, $M \le 3$ implies $m\ge 1$, and $M < 3$ implies $m>1$. 
\end{proof}

Theorem~\ref{thm:main} also shows that for degrees $n>2$, the inequality $M(B) \le n+1$ does not imply $m(B) \ge n-1$. Thus, for $n > 2$ there exist Blaschke products such that $z^{n+1}/B(z) $ is a circle homeomorphism but $B(z)/z^{n-1}$ is not. 

The paper is structured as follows. The first part of Theorem~\ref{thm:main} is proved in Section~\ref{sec:prelim}. After introducing hypergeometric functions in Section~\ref{sec:hyper} we establish a nonlinear relation between three such functions  (Lemma~\ref{lem:identity}) which appears to be new. This relation unlocks the extremal properties of \emph{hypergeometric Blaschke products} in Section~\ref{sec:hyper-Blaschke}. We complete the proof of Theorem~\ref{thm:main} in Section~\ref{sec:sharpness} and prove the uniqueness of extremal products (Theorem~\ref{thm:uniqueness}) in Section~\ref{sec:uniqueness}. Section~\ref{sec:zeros} concerns the zeros of extremal Blaschke products.  

\section{Preliminaries}\label{sec:prelim} 

We begin the proof of Theorem~\ref{thm:main} with the easy part~(i). It is based on the following lemma, which is proved by considering the residues of a rational function at its simple poles. 

\begin{lemma}\cite[Lemma 4.2]{Ellipses} \label{lem:ellipses} Let $B$ be a Blaschke product of degree $n\ge 2$ with $B(0)=0$. Fix $\lambda\in \mathbb T$ and let $z_1,\dots, z_n\in \mathbb T$ denote the $n$ distinct solutions of $B(z)=\lambda$. Then 
\begin{equation}\label{ellipses1}
    \frac{B(z)}{z(B(z)-\lambda)} = \sum_{j=1}^n \frac{m_j}{z-z_j}
\end{equation}
where $\sum_{j=1}^n m_j=1$ and 
\begin{equation}\label{ellipses9}
\frac{1}{m_j} = |B'(z_j)| = \frac{z_j B'(z_j)}{\lambda}, \quad j=1, \dots, n.
\end{equation}
\end{lemma}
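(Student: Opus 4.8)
The plan is to apply partial-fraction decomposition to the rational function
\[
f(z) = \frac{B(z)}{z\,(B(z)-\lambda)},
\]
to read off the $m_j$ as its residues, and then to identify those residues using a positivity property of the logarithmic derivative of $B$ on $\mathbb{T}$.

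First I would locate the poles of $f$. Because $B$ restricts to $\mathbb{T}$ as an $n$-fold covering onto $\mathbb{T}$ and $\lambda\in\mathbb{T}$, the equation $B(z)=\lambda$ has exactly $n$ solutions on $\mathbb{T}$; writing $B=P/Q$ with $\deg P=n$ and $\deg Q=n-1$ (using $B(0)=0$), the polynomial equation $P-\lambda Q=0$ has degree $n$, so these $n$ unimodular points are \emph{all} of its roots and each is simple (equivalently $B'(z_j)\ne 0$, since $B|_{\mathbb{T}}$ is a local diffeomorphism). I would then check that the remaining apparent singularities of $f$ are removable: at $z=0$ the factor $z$ in the denominator is cancelled by the zero of $B$ in the numerator, and at each pole $1/\overline{a_k}$ of $B$ the numerator and the factor $B-\lambda$ blow up at the same rate, so $f$ stays finite. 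Hence $f$ is rational with simple poles exactly at $z_1,\dots,z_n$, each nonzero since $B(z_j)=\lambda\ne 0=B(0)$.

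Next I would examine the behavior at infinity. Since $\deg P>\deg Q$, we have $B(z)\to\infty$ as $z\to\infty$, hence $zf(z)=B(z)/(B(z)-\lambda)\to 1$; thus $f(z)=1/z+o(1/z)$, and in particular $f$ has no polynomial part. Partial fractions therefore give $f(z)=\sum_{j=1}^n m_j/(z-z_j)$ with $m_j=\operatorname{Res}_{z=z_j}f$, which is exactly~\eqref{ellipses1}; multiplying by $z$ and letting $z\to\infty$ yields $\sum_j m_j=1$. A direct residue computation at the simple pole $z_j$ gives
\[
m_j=\lim_{z\to z_j}\frac{(z-z_j)\,B(z)}{z\,(B(z)-\lambda)}=\frac{B(z_j)}{z_j B'(z_j)}=\frac{\lambda}{z_j B'(z_j)},
\]
so that $1/m_j=z_j B'(z_j)/\lambda$.

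It remains to show $z_j B'(z_j)/\lambda=|B'(z_j)|$, and this is the one step that is not purely formal. The key fact is that for every $z\in\mathbb{T}$ the quantity $zB'(z)/B(z)$ is real and positive: summing the logarithmic derivatives of the Blaschke factors $b_{a_k}(z)=(z-a_k)/(1-\overline{a_k}z)$ and using $\overline{1-\overline{a_k}z}=(z-a_k)/z$ on $\mathbb{T}$, one finds $zB'(z)/B(z)=\sum_k (1-|a_k|^2)/|z-a_k|^2>0$. Since $|z|=|B(z)|=1$ on $\mathbb{T}$, this positive number equals its own modulus $|B'(z)|$. Evaluating at $z=z_j$ and using $B(z_j)=\lambda$ gives $z_jB'(z_j)/\lambda=|B'(z_j)|$, which completes~\eqref{ellipses9}. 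The main obstacle is recognizing and establishing this positivity/reality of $zB'/B$ on the circle; the rest is routine residue calculus.
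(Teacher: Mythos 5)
Your proof is correct and takes essentially the same route as the paper's (cited) argument: the paper attributes this lemma to \cite[Lemma 4.2]{Ellipses} and describes its proof precisely as ``considering the residues of a rational function at its simple poles,'' which is exactly your partial-fraction and residue computation for $B(z)/\bigl(z(B(z)-\lambda)\bigr)$. The one nontrivial ingredient you identify, the identity $zB'(z)/B(z)=\sum_k P(a_k,z)=|B'(z)|$ on $\mathbb{T}$, is likewise the paper's Lemma~\ref{lem:poisson}, so your write-up matches the intended proof in both structure and key steps.
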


\begin{proof}[Proof of Theorem~\ref{thm:main} (i)] Let $B$ be any Blaschke product of degree $n\ge 1$. Apply Lemma~\ref{lem:ellipses} to the product $\widetilde{B}(z) := zB(z)$, which satisfies $|\widetilde{B}'(z)| = |B'(z)|+1$ for all $z\in \mathbb T$. The lemma shows that for every $\lambda \in \mathbb T$ we have
\begin{equation}\label{ellipses2}
    \sum_{j=0}^{n} \frac{1}{|B'(z_j)| + 1} = 1
\end{equation}
where $z_0, \dots, z_{n}$ are solutions of the equation $zB(z) = \lambda$.

Using the inequality $|B'(z_j)|\le M$ for $j=1, \dots, n$,  we obtain 
\[
\frac{1}{|B'(z_0)|+1} + \frac{n}{M+1} \le 1 .
\]
Hence
\[ |B'(z_0)|\ge \frac{n}{M-n+1} \]
and since $\lambda\in \mathbb T$ was arbitrary, the left hand side of~\eqref{thm:main-ineq} follows. 

To prove the right hand side of~\eqref{thm:main-ineq} we assume $m>n-1$ since the inequality is trivial otherwise. Using $|B'(z_j)|\ge m$ for $j=1, \dots, n$,  we obtain 
\[
\frac{1}{|B'(z_0)|+1} + \frac{n}{m+1} \ge 1 .
\]
Since $\lambda$ was arbitrary, it follows that $M\le  \frac{n}{m-n+1}$, hence $m \le n-1 + n/M$.  
\end{proof}

Seeing that the second part of Theorem~\ref{thm:main} is the converse of the part that was derived from Lemma~\ref{lem:ellipses}, it is natural to ask about the converse of this lemma. The converse indeed holds in the following form.

\begin{lemma}\label{lem:prescribed}
Given any distinct points $z_1, \dots, z_n\in \mathbb T$ and positive numbers $m_1, \dots, m_n$ with $\sum_{j=1}^n m_j = 1$, there exists a Blaschke product $B$ of degree $n$ with $B(0)=0$ such that~\eqref{ellipses1} and~\eqref{ellipses9} hold for some $\lambda \in \mathbb T$.
\end{lemma}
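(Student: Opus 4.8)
The plan is to invert the relation~\eqref{ellipses1}: I treat the prescribed data as the partial-fraction expansion of its right-hand side and solve for $B$. Set
\[
G(z) = \sum_{j=1}^n \frac{m_j}{z - z_j}, \qquad H(z) = zG(z) - 1 = \sum_{j=1}^n \frac{m_j z_j}{z - z_j},
\]
where the second identity uses $z/(z-z_j) = 1 + z_j/(z-z_j)$ together with $\sum_j m_j = 1$. Solving~\eqref{ellipses1} for $B$ produces the candidate
\[
B(z) = \frac{\lambda\, zG(z)}{zG(z) - 1} = \lambda\,\frac{H(z) + 1}{H(z)},
\]
with $\lambda \in \mathbb T$ a free unimodular parameter. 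Writing $H = N/P$ over the common denominator $P(z) = \prod_{j}(z - z_j)$, with $N(z) = \sum_j m_j z_j \prod_{k\ne j}(z - z_k)$, one has $B = \lambda (P+N)/N$ where $\deg(P+N) = n$ and $\deg N = n-1$; a direct evaluation of $P(0)+N(0)$ using $\sum_j m_j = 1$ gives $B(0)=0$.

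The crux is to show that this $B$ is a genuine Blaschke product of degree $n$, which reduces to locating its poles, i.e. the zeros of $H$. The key is a half-plane estimate. For $z,z_j\in\mathbb T$ a short conjugation computation gives $\overline{z_j/(z-z_j)} = -z/(z-z_j)$, hence $\overline{H} = -zG = -(H+1)$ on $\mathbb T$, so $\operatorname{Re} H \equiv -\tfrac12$ there. Since each Möbius map $z\mapsto z_j/(z-z_j)$ carries $\mathbb D$ onto the half-plane $\{\operatorname{Re} w < -\tfrac12\}$ and the $m_j$ are positive with $\sum_j m_j=1$, the convex combination $H$ satisfies $\operatorname{Re} H < -\tfrac12$ throughout $\mathbb D$. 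In particular $H$ has no zeros in $\overline{\mathbb D}$, so $B$ has no poles in $\overline{\mathbb D}$; and the elementary computation $|(H+1)/H| = 1$ on the line $\{\operatorname{Re} = -\tfrac12\}$ shows $|B|=1$ on $\mathbb T$. A rational function holomorphic on $\overline{\mathbb D}$ and unimodular on $\mathbb T$ is a finite Blaschke product, and checking that $P+N$ and $N$ share no common factor (a common zero would be a zero of $P$, i.e. some $z_j$, where $N(z_j) = m_j z_j \prod_{k\ne j}(z_j - z_k)\ne 0$) shows that the Blaschke degree equals $n$.

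It remains to recover~\eqref{ellipses1} and~\eqref{ellipses9}. The expansion~\eqref{ellipses1} holds by construction: from $B - \lambda = \lambda/H$ and $B = \lambda(H+1)/H$ one computes $B/\bigl(z(B-\lambda)\bigr) = (H+1)/z = G$. Moreover $B(z)=\lambda$ precisely when $H(z)=\infty$, that is at $z=z_1,\dots,z_n$, so these are exactly the $n$ distinct solutions of $B(z)=\lambda$. Applying Lemma~\ref{lem:ellipses} to $B$ with this $\lambda$ gives the expansion~\eqref{ellipses1} with residues $1/|B'(z_j)|$; comparing with the residues $m_j$ of $G$ yields $m_j = 1/|B'(z_j)|$, which is~\eqref{ellipses9}. (Alternatively, the residue of $B/\bigl(z(B-\lambda)\bigr)$ at the simple pole $z_j$ equals $\lambda/(z_j B'(z_j))$, and the standard identity $z_j B'(z_j)/\lambda = |B'(z_j)|$ for Blaschke products on $\mathbb T$ gives~\eqref{ellipses9} directly, covering also the case $n=1$.)

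I expect the main obstacle to be the pole-location step, and more precisely the recognition that the right object to estimate is $H$ rather than $B$ itself: once $H = zG-1$ is isolated, the boundary identity $\operatorname{Re} H = -\tfrac12$ and the interior inequality $\operatorname{Re} H < -\tfrac12$ both fall out, the latter being merely the remark that a positive convex combination of points in an open half-plane remains in that half-plane. Everything else is bookkeeping with partial fractions and residues.
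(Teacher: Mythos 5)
Your proof is correct, but there is nothing in the paper to compare it against: the authors do not prove Lemma~\ref{lem:prescribed} at all, they cite Gau--Wu \cite{GauWu} (where it arises from numerical ranges of completely nonunitary contractions) and the more direct construction of Gorkin--Rhoades \cite{GorkinRhoades}. Your argument supplies what the paper outsources, and every step checks out: solving \eqref{ellipses1} for $B$ gives $B=\lambda(H+1)/H$ with $H(z)=\sum_j m_j z_j/(z-z_j)$; on $\mathbb T$ one indeed has $\operatorname{Re}H=-\tfrac12$; each M\"obius map $z\mapsto z_j/(z-z_j)$ does send $\mathbb D$ onto $\{\operatorname{Re}w<-\tfrac12\}$, so the convex combination $H$ has no zeros in $\overline{\mathbb D}$, whence $B$ has no poles there and $|B|=1$ on $\mathbb T$; the computations $P(0)+N(0)=0$ and $N(z_j)\ne 0$, and the residue comparison giving \eqref{ellipses9}, are all right, and your closing residue remark even covers $n=1$, where Lemma~\ref{lem:ellipses} (stated for $n\ge 2$) is unavailable. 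Structurally, your proof is the cited direct construction in lightly disguised form: under the affine substitution $w\mapsto -1-2w$ your $H$ becomes the Herglotz-type sum $\sum_j m_j(z_j+z)/(z_j-z)$, and your half-plane condition $\operatorname{Re}H<-\tfrac12$ is exactly positivity of its real part, which is the engine of the Gorkin--Rhoades-style argument; what your version buys is a self-contained exposition, at the cost of a page. One small slip: the claim $\deg N=n-1$ can fail, since the leading coefficient of $N$ is $\sum_j m_j z_j$, which may vanish (take $n=2$, $z_{1,2}=\pm 1$, $m_{1,2}=\tfrac12$); this is harmless, because the degree count only needs $\deg(P+N)=n$ together with the coprimality of $P+N$ and $N$.
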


Lemma~\ref{lem:prescribed} goes back to the work of Gau and Wu~\cite[Theorem 3.1]{GauWu} on the numerical range of certain completely nonunitary contractions. A more direct proof was given by Gorkin and Rhoades in~\cite[Theorem 9]{GorkinRhoades}, see also~\cite[Lemma 13]{DaeppGorkinVoss}. 

However, an attempt to use Lemma~\ref{lem:prescribed} to prove the second part of Theorem~\ref{thm:main} is unlikely to succeed. Indeed, given $n, M, m$ as in Theorem~\ref{thm:main}~(ii), one can use Lemma~\ref{lem:prescribed} to construct a Blaschke product $B$ with $B(0)=0$ and $\deg B=n+1$ such that for  $\widetilde{B}(z)=B(z)/z$ it holds that $|\widetilde B'|$ attains the values $M$ and $m$, and only these values, on some  set of the form $B^{-1}(\lambda)$. But this only tells us that $M(\widetilde B)\ge M$ and $m(\widetilde B)\le m$, not that $M(\widetilde B) = M$ and $m(\widetilde B) = m$. Our proof of Theorem~\ref{thm:main}~(ii) is based on a completely different idea. It involves constructing Blaschke products from hypergeometric functions, which are the  subject of next section.

\section{Hypergeometric functions}\label{sec:hyper}

\begin{definition}\label{def:hyper} The \emph{hypergeometric function} $F$ is defined by the power series 
\begin{equation}\label{hyper-series}
F(a, b; c; z) =      
    \sum_{k=0}^\infty \frac{(a)_k (b)_k}{(c)_k} \frac{z^k}{k!}
\end{equation}
where subscripts (the Pochhammer symbol) are understood as rising factorials: $(a)_0=a$ and $(a)_k = a(a+1)\cdots (a+k-1)$ for $k\in \mathbb N$. In general, ~\eqref{hyper-series} is well defined if $1-c\notin \mathbb N$ and $|z|<1$. But in the special case when $a = -n$ for some $n\in \mathbb N$, the series terminates at the index $k=n$, becoming a \emph{hypergeometric polynomial}. In this case we can allow any $z\in \mathbb C$ and any value of $c$ in the set $\mathbb C\setminus \{0, -1, -2, \dots, 1-n\}$. The latter holds because $(c)_k \ne 0$ when $k=0, \dots, n$. 
\end{definition}

The following is a known result (see ~\cite[Theorem 1]{DriverDuren-Indag} and~\cite{Ridley}) but we include a short proof. 

\begin{proposition}\label{zeros-circle} Fix $n\in \mathbb N$ and a real number $\lambda > -1/2$ such that $\lambda\ne 0$. All zeros of the  polynomial $h(z) = F(-n, \lambda; -n+1 - \lambda; z)$ are simple and lie on the unit circle $\mathbb T$. 
\end{proposition}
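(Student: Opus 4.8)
The plan is to reduce the proposition to the classical localization of the zeros of the Gegenbauer (ultraspherical) polynomials $C_n^{(\lambda)}$: for $\lambda>-1/2$ with $\lambda\neq0$ the polynomial $C_n^{(\lambda)}$ has exactly $n$ zeros, all simple and lying in the open interval $(-1,1)$. These are precisely the hypotheses of the proposition: the bound $\lambda>-1/2$ is what makes $(1-x^2)^{\lambda-1/2}$ a positive integrable weight, so that $C_n^{(\lambda)}$ is a genuine orthogonal polynomial of degree $n$, while $\lambda\neq0$ keeps the relevant generating function nondegenerate. I would link $h$ to $C_n^{(\lambda)}$ through the substitution $z=\zeta^2$, and then read off the zeros of $h$ on $\mathbb T$ from those of $C_n^{(\lambda)}$ on $(-1,1)$.

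First I would rewrite the coefficients. With $h(z)=\sum_{k=0}^n c_k z^k$ and $c_k=(-n)_k(\lambda)_k/((-n+1-\lambda)_k\,k!)$, the identities $(-n)_k/k!=(-1)^k\binom nk$ and $(-n+1-\lambda)_k=(-1)^k(\lambda+n-k)_k$ turn this into the symmetric form $c_k=\binom nk(\lambda)_k/(\lambda+n-k)_k$, which in particular exhibits $c_k=c_{n-k}$. Passing to Gamma functions gives $c_k=\frac{n!}{\Gamma(\lambda)\Gamma(\lambda+n)}\frac{\Gamma(\lambda+k)}{k!}\frac{\Gamma(\lambda+n-k)}{(n-k)!}$, and recognizing these factors as Taylor coefficients of the binomial series $(1-v)^{-\lambda}$ and $(1-zv)^{-\lambda}$, a Cauchy product yields
\[
h(z)=\frac{n!}{(\lambda)_n}\,[v^n]\bigl\{(1-v)(1-zv)\bigr\}^{-\lambda},
\]
where $[v^n]$ denotes the coefficient of $v^n$. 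Substituting $z=\zeta^2$ and $v=w/\zeta$ gives $(1-v)(1-\zeta^2v)=1-2xw+w^2$ with $x=(\zeta+\zeta^{-1})/2$, and since $[v^n]f(v)=\zeta^n[w^n]f(w/\zeta)$, the Gegenbauer generating function $(1-2xw+w^2)^{-\lambda}=\sum_m C_m^{(\lambda)}(x)w^m$ produces
\[
h(\zeta^2)=\frac{n!}{(\lambda)_n}\,\zeta^n\,C_n^{(\lambda)}\!\Bigl(\tfrac{\zeta+\zeta^{-1}}{2}\Bigr).
\]
In particular $h(z)=0$ if and only if $C_n^{(\lambda)}(x)=0$.

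To finish, I would parametrize $\mathbb T\setminus\{1\}$ by $\zeta=e^{i\psi}$, $\psi\in(0,\pi)$, so that $z=e^{2i\psi}$ and $x=\cos\psi$ traverse $\mathbb T\setminus\{1\}$ and $(-1,1)$ respectively, each bijectively. Each of the $n$ simple zeros $x_\ast\in(-1,1)$ of $C_n^{(\lambda)}$ then corresponds to a unique $\psi_\ast\in(0,\pi)$, hence to a unique $z_\ast=e^{2i\psi_\ast}\in\mathbb T$ with $h(z_\ast)=0$; since $\deg h=n$ these account for all zeros of $h$, and $z=1$ is excluded because $x=\pm1$ are not zeros of $C_n^{(\lambda)}$. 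Simplicity transfers because $z\mapsto x$ is a local biholomorphism away from $z=1$ (its derivative vanishes only where $\zeta^2=1$): near $z_\ast$ one has $h(z)=\mathrm{const}\cdot\zeta^n C_n^{(\lambda)}(x)$ with $\zeta_\ast\neq0$ and $x'(z_\ast)\neq0$, so a simple zero of $C_n^{(\lambda)}$ at $x_\ast\neq\pm1$ pulls back to a simple zero of $h$.

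The step most prone to error is the bookkeeping of the double cover $\zeta=\sqrt z$. Naively associating to a zero $x_\ast=\cos\phi$ of $C_n^{(\lambda)}$ the two points $e^{\pm2i\phi}$ appears to double the count and overshoot the degree. The resolution is that the zeros of $C_n^{(\lambda)}$ are symmetric under $x\mapsto -x$ by the parity $C_n^{(\lambda)}(-x)=(-1)^nC_n^{(\lambda)}(x)$, and the partner $-x_\ast$ is exactly what accounts for the second point $e^{-2i\phi}$ (with the self-paired zero $x_\ast=0$, present when $n$ is odd, accounting for the single point $z=-1$); restricting to the branch $\psi\in(0,\pi)$ makes $z\mapsto x$ an honest bijection $\mathbb T\setminus\{1\}\to(-1,1)$ and reconciles the two counts.
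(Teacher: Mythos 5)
Your proof is correct and takes essentially the same route as the paper: both reduce the claim to the classical fact that $C_n^{(\lambda)}$ has $n$ simple zeros in $(-1,1)$ for $\lambda>-1/2$, $\lambda\neq 0$, via the identity $h(\zeta^2)=\frac{n!}{(\lambda)_n}\,\zeta^n\,C_n^{(\lambda)}\bigl(\tfrac{\zeta+\zeta^{-1}}{2}\bigr)$, which the paper simply cites (DLMF 18.5.11 / Abramowitz--Stegun 22.3.12) while you rederive it from the generating function. The additional care you take with the double cover $z=\zeta^2$ and the transfer of simplicity is precisely the content the paper compresses into ``the relation implies $h$ has $n$ simple zeros on $\mathbb T$,'' so your write-up is a more self-contained version of the same argument.
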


\begin{proof} The polynomial $h$ is related to the Gegenbauer polynomial $C_n^{(\lambda)}$ by the formula
\begin{equation}\label{Gegenbauer}
C_n^{(\lambda)}(\cos \theta)
= e^{in\theta} \frac{(\lambda)_n}{n!}
h(e^{-2i\theta}), \quad 0\le \theta \le \pi,
\end{equation}
see~\cite[22.3.12]{AbramowitzStegun} or ~\cite[18.5.11]{DLMF}.  
The general theory of orthogonal polynomials~\cite[Theorem 5.4.1]{AndrewsAskeyRoy} shows that $C_n^{(\lambda)}$ has $n$ simple zeros on $(-1, 1)$. The relation~\eqref{Gegenbauer} implies $h$ has $n$ simple zeros on $\mathbb T$, as claimed. 
\end{proof}

The condition $\lambda > -1/2$ in Proposition~\ref{zeros-circle} is sharp. 
Driver and Duren~\cite{DriverDuren-CA} present a detailed picture of the roots of $h$ when $\lambda < -1/2$, in which case they are no longer  contained in $\mathbb T$. 

\begin{lemma}\label{lem:zeros-disk} Fix $n\in \mathbb N$ and a real nonzero number $\nu > -1$. 
All zeros of the hypergeometric polynomial $p(z) = F(-n, \nu+2; -n+1-\nu; z)$  lie in the open unit disk $\mathbb D$. 
\end{lemma}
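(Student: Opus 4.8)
The plan is to deduce the location of the zeros of $p$ from the companion polynomial $\varphi(z) = F(-n,\nu+1;-n-\nu;z)$, whose zeros are already understood. Taking $\lambda = \nu+1$ in Proposition~\ref{zeros-circle} is legitimate, since $\nu>-1$ forces $\lambda = \nu+1>0$, hence $\lambda>-1/2$ and $\lambda\ne 0$; thus $\varphi$ has $n$ simple zeros, all lying on $\mathbb{T}$. The polynomials $p$ and $\varphi$ differ only by the simultaneous unit shift $(b,c)\mapsto(b+1,c+1)$ of the lower parameters, so I would first record the contiguous relation governing this shift. Writing $\theta = z\,d/dz$, a term-by-term comparison of the two power series yields the operator identity
\[
\left(\theta - n - \nu\right) p(z) = -\frac{n+\nu}{\nu+1}\left(\theta + \nu + 1\right)\varphi(z),
\]
and one checks (using $\nu\ne 0$, which makes the top coefficient $-\nu p_n$ nonzero) that both sides are genuine polynomials of degree $n$.

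To pass from ``zeros on $\mathbb{T}$'' to ``zeros in $\mathbb{D}$'' I would embed $\varphi$ and $p$ in the one–parameter family
\[
p_s(z) = F(-n,\ \nu+1+s;\ -n-\nu+s;\ z), \qquad s\in[0,1],
\]
so that $p_0 = \varphi$ and $p_1 = p$, and track the $n$ zeros $z_1(s),\dots,z_n(s)$. Since $b+c = -n+1+2s$ equals the Gegenbauer value $-n+1$ only at $s=0$, the polynomial $p_s$ is of Proposition~\ref{zeros-circle} type only at the start of the homotopy, so for $s>0$ the zeros should leave the circle. The direction of motion is favorable: the product of the zeros is explicit,
\[
\prod_{j=1}^n z_j(s) = \frac{(-n-\nu+s)_n}{(\nu+1+s)_n},
\]
and differentiating $\log\bigl|\prod_j z_j(s)\bigr| = \sum_j \log|z_j(s)|$ at $s=0$ gives $-2\sum_{m=1}^n (m+\nu)^{-1}<0$, where each $m+\nu>0$ because $\nu>-1$. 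Thus, at least on average, the zeros move strictly into $\mathbb{D}$ as $s$ increases from $0$.

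The hard part is global control of this homotopy. If I can show that $p_s$ has no zeros on $\mathbb{T}$ for any $s\in(0,1]$, then no zero crosses the circle, the number of zeros of $p_s$ inside $\mathbb{D}$ stays constant, and the favorable initial motion forces that constant to be $n$, giving the claim at $s=1$. The natural tool for excluding boundary zeros is the reversal symmetry: the conjugate–reciprocal of $p_s$ is proportional to $p_{-s}$, so a zero of $p_s$ on $\mathbb{T}$ would be a common zero of $p_s$ and $p_{-s}$, which I would attempt to rule out using the contiguous identity above (it links $p_s$ to $p_{s-1}$) together with the simplicity of the zeros of the self-inversive member $\varphi = p_0$. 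Making this exclusion rigorous — that is, controlling each individual zero, including its initial inward motion, rather than merely the product of all of them — is the crux of the argument and the step I expect to require the most care.
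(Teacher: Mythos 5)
Your write-up is a program, not a proof: the preparatory computations (the choice $\lambda=\nu+1$ in Proposition~\ref{zeros-circle}, the operator identity, the product-of-zeros formula and its derivative at $s=0$, the fact that the conjugate-reciprocal of $p_s$ is proportional to $p_{-s}$) all check out, but the two steps on which the conclusion rests are exactly the ones left open, and neither follows from what you establish. (i) The no-crossing claim, that $p_s$ has no zeros on $\mathbb{T}$ for any $s\in(0,1]$, is asserted rather than proved, and the mechanism you suggest for it has a structural obstruction: a circle zero of $p_s$ is indeed a common zero of $p_s$ and $p_{-s}$, but at $s=0$ \emph{every} zero of $p_0=\varphi$ is such a common zero, so any resultant-type quantity measuring coprimality of $p_s$ and $p_{-s}$ vanishes at $s=0$; you would need it nonzero for all $s\in(0,1]$, including $s$ arbitrarily close to this degenerate value, which is a genuinely quantitative statement. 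Moreover, your contiguous identity shifts $s$ by a full unit (it links $p_s$ to $p_{s-1}$), so it does not connect $p_s$ to $p_{-s}$ for $0<s<1$, and simplicity of the zeros of $\varphi$ alone does not bridge that. (ii) Even granting no-crossing, the constant number of zeros of $p_s$ in $\mathbb{D}$ for $s\in(0,1]$ is not forced to be $n$: the negativity of $\frac{d}{ds}\sum_j\log|z_j(s)|$ at $s=0$ only says the zeros move inward \emph{on average}, i.e., at least one zero enters $\mathbb{D}$; it is perfectly compatible with $n-1$ zeros drifting outward while one dives inside, since all $n$ zeros start on $\mathbb{T}$. You flag both points yourself as the crux, and that assessment is correct — but it means the proposal has a hole precisely at its center.

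For comparison, the paper needs no homotopy at all. For $\nu>-1/2$ it applies Proposition~\ref{zeros-circle} to $q(z)=F(-n-2,\nu;-n-1-\nu;z)$, a polynomial of degree $n+2$ with simple zeros on $\mathbb{T}$, invokes the Gauss--Lucas theorem to place all zeros of $q''$ in $\mathbb{D}$, and identifies $q''$ as a constant multiple of $p$ by the hypergeometric derivative formula. For $-1<\nu<0$ it observes that the coefficients $a_0,\dots,a_{n-1}$ of $p$ are positive while $a_n<0$, that $p(1)<0$ by the Chu--Vandermonde identity, hence $|a_n|>\sum_{k=0}^{n-1}|a_k|$, and Rouch\'e's theorem puts all $n$ zeros in $\mathbb{D}$. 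Each case takes a few lines; any rescue of your approach would require the missing no-crossing statement to be proved by an argument of at least that strength, at which point the homotopy scaffolding becomes superfluous.
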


\begin{proof} We distinguish two cases. If $\nu>-1/2$, Lemma~\ref{zeros-circle}  implies that all zeros of the polynomial $q(z) =  F(-n-2, \nu; -n - 1 - \nu; z)$ are simple and lie on the unit circle $\mathbb T$. The Gauss-Lucas theorem~\cite[\S2.1.6]{Sheil-Small} implies that every derivative of $q$  has all its zeros in $\mathbb D$. The derivative formula for hypergeometric functions~\cite[(2.5.1)]{AndrewsAskeyRoy} yields
\[
q''(z) = \frac{(-n-2)_2 (\nu)_2}{(-n-1-\nu)_2} p(z) 
\]
proving the claim in this case.  

In the case $-1<\nu<0$ we use a different approach, based on the coefficients of $p$:
\[
p(z)= \sum_{k=0}^n 
\frac{(-n)_k (\nu+2)_k }{(-n+1-\nu)_k}  \frac{z^k}{k!}
=: \sum_{k=0}^n a_k z^k.
\] 
When $0\le k\le n-1$ the coefficient  $a_k$ is positive because both $(-n)_k$ and $(-n+1-\nu)_k$ have the same sign as $(-1)^k$. However, $a_n = \frac{(\nu+2)_n }{(\nu)_n} < 0$. By the Chu-Vandermonde identity~\cite[Corollary 2.2.3]{AndrewsAskeyRoy}
\begin{equation}\label{p(1)negative}
\sum_{k=0}^n a_k = 
p(1) = \frac{(-2\nu-n-1)_n}{(-n+1-\nu)_n} = \frac{(2\nu+2)_n}{(\nu)_n} < 0.
\end{equation} 
where the last inequality follows from $2\nu+2>0$ and $-1<\nu < 0$. Therefore, $|a_n|>\sum_{k=0}^{n-1} |a_k|$, which by Rouch\'e's theorem implies that $p$ has all zeros in $\mathbb D$. 
\end{proof}

The main result of this section is a nonlinear relation between several hypergeometric functions which provides a concise formula for the Wronskian determinant of two such functions. 

\begin{lemma}\label{lem:identity} Let $a,b,c\in \mathbb C$ be such that $c = a-b+1$. If $a$ is a negative integer, we assume that $c \notin \{a, a+1, \dots, 0, 1\}$. Otherwise, assume $2-c\notin \mathbb N$. These conditions ensure that the following hypergeometric functions can be defined according to Definition~\ref{def:hyper}: 
\begin{align*}
f(z) & =F(a,b+1;c+1;z) \\
g(z) &=F(a,b-1;c-1;z)\\
h(z) & =F(a,b;c;z)
\end{align*}
The following identity holds for all $z\in \mathbb C$ when $a$ is a negative integer, and for $z\in \mathbb D$ otherwise. 
\begin{equation}\label{wronskian} 
z(fg' - f'g) = c (fg-h^2)   
\end{equation}
\end{lemma}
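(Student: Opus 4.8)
The plan is to prove that the function $P(z):=z(fg'-f'g)-c(fg-h^2)$ vanishes identically (on $\mathbb D$, and hence on all of $\mathbb C$ in the terminating case). Since $f(0)=g(0)=h(0)=1$ one has $P(0)=0$, and evaluating the first Taylor coefficients with $\alpha_1=a(b+1)/(c+1)$, $\beta_1=a(b-1)/(c-1)$, $\gamma_1=ab/c$ shows, after using $c=a-b+1$, that the coefficient of $z$ in $P$ also vanishes. I would try to promote this to an identity by producing a first-order relation for $P$.

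The main computational tool I would use is a pair of contiguous relations coming from the Euler operator $\vartheta=z\,\frac{d}{dz}$. Acting termwise on the series one checks that $(\vartheta+b)F(a,b;c;z)=b\,F(a,b+1;c;z)$ and $(\vartheta+c-1)F(a,b;c;z)=(c-1)F(a,b;c-1;z)$. Applying these to $f,g,h$ and matching the intermediate functions $F(a,b+1;c;z)$ and $F(a,b;c-1;z)$ produces the two first-order relations
\[
b\,zf'-c\,zh'+bc(f-h)=0,\qquad (c-1)zg'-(b-1)zh'+(b-1)(c-1)(g-h)=0 ,
\]
valid for all admissible parameters. Using them to substitute for $zf'$ and $zg'$ in $z(fg'-f'g)=f\,(zg')-g\,(zf')$ and collecting terms reduces the claim $P\equiv0$ to
\[
\Bigl(\tfrac{(b-1)f}{c-1}-\tfrac{cg}{b}\Bigr)zh'+(h-g)\bigl[(b-1)f+c\,h\bigr]=0 .
\]
Notably the hypothesis $c=a-b+1$ has not yet been used, so it must be exactly what makes this last identity true.

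The step I expect to be the main obstacle is eliminating the residual factor $zh'$ from this reduced identity. Here the two first-order relations are of no further help: re-expressing $zh'$ through either of them simply returns $P$, so all the available first-order contiguous relations are interderivable and none of them can close the argument. Genuinely new, second-order information is required, and I would obtain it from the hypergeometric differential equation $z(1-z)h''+[c-(a+b+1)z]h'-ab\,h=0$ satisfied by $h$ — equivalently, from the quadratic transformation that is available precisely in the resonant case $c=a-b+1$ — to supply the one extra relation among $f,g,h$ that cancels the $zh'$ term. Combined with $P(0)=0$ this forces $P\equiv0$.

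As a more computational alternative that bypasses this bookkeeping, I would compare Taylor coefficients directly. Writing $fg=\sum_m S_m z^m$, $h^2=\sum_m U_m z^m$ and $zf'g=\sum_m T_m z^m$, the identity is equivalent to $(m-c)S_m-2T_m+cU_m=0$ for every $m\ge0$. Each of $S_m,T_m,U_m$ is a terminating sum of ratios of Pochhammer symbols, and imposing $c=a-b+1$ renders the relevant sums balanced, hence evaluable by the Chu--Vandermonde and Pfaff--Saalsch\"utz theorems already invoked in this section. Since every coefficient of both sides is a rational function of $(a,b)$, it even suffices to verify the identity for $a=-n$, $n\in\mathbb N$; in that polynomial regime $h$ is essentially the Gegenbauer polynomial of Proposition~\ref{zeros-circle} and $fg-h^2$ is a Tur\'anian determinant, a setting in which such quadratic identities are classical.
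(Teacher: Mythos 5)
Your reduction is correct as far as it goes: the two Euler-operator contiguous relations you write down are valid for all admissible parameters, and substituting them into $z(fg'-f'g)=f\,(zg')-g\,(zf')$ does reduce the claim to the residual identity
\[
\Bigl(\tfrac{(b-1)f}{c-1}-\tfrac{cg}{b}\Bigr)zh'+(h-g)\bigl[(b-1)f+c\,h\bigr]=0 ,
\]
which is indeed exactly where the hypothesis $c=a-b+1$ must enter. But at this point your proof stops: you assert that the $zh'$ term can be cancelled using ``the hypergeometric differential equation --- equivalently, the quadratic transformation'' without ever deriving the cancellation, so the decisive step is missing. Worse, your diagnosis that ``genuinely new, second-order information is required'' is incorrect. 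What closes the argument in the paper is not the ODE but a second family of \emph{linear} contiguous relations, namely~\eqref{DLMF.5.15} and~\eqref{DLMF.5.16}, which shift the parameter $a$ and carry factors of $(1-z)$; these are not consequences of the two $\vartheta$-relations you restrict yourself to, but they are still first-order facts. Concretely, with $\tilde f=F(a,b+1;c;z)$, relation~\eqref{DLMF.5.16} gives $c(1-z)\tilde f-ch+(c-a)zf=0$, and combining this with $zh'=b(\tilde f-h)$ (your own first relation, i.e.~\eqref{key relation 1}) yields
\[
(1-z)h' \;=\; b\Bigl(h-\tfrac{c-a}{c}\,f\Bigr)\;=\;bh+\tfrac{b(b-1)}{c}\,f ,
\]
where the last equality uses $c-a=1-b$. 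Relations~\eqref{DLMF.5.15}--\eqref{DLMF.5.16} also combine, via $c-1=a-b$, into the linear three-term relation $g=(1-z)h+\tfrac{b(b-1)}{c(c-1)}\,zf$, which is the paper's identity~\eqref{fgh linear 2}. Substituting these two facts into your residual identity makes every term cancel, so your outline can in fact be completed --- but only by importing exactly the kind of contiguous relation your proposal declares to be of no further help.

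Your fallback argument is likewise only a sketch. The coefficients $S_m$, $T_m$, $U_m$ are Cauchy-product coefficients of two \emph{different} hypergeometric series, and the resulting unit-argument sums are of higher order than balanced terminating ${}_3F_2$'s, so the claim that Chu--Vandermonde and Pfaff--Saalsch\"utz suffice is unsubstantiated. The reduction to $a=-n$ by rationality of the coefficients is legitimate, but it still leaves the polynomial-case identity (``a Tur\'anian determinant \dots classical'') unproven. As it stands, the proposal has a genuine gap precisely at the step where the paper's proof does its real work: supplying the one additional relation among $f$, $h$, $\tilde f$ (equivalently, between $g$, $h$, $f$) that the hypothesis $c=a-b+1$ makes available.
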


Our proof of Lemma~\ref{lem:identity} involves  two of the standard linear relations between contiguous hypergeometric functions (\cite[\S2.5]{AndrewsAskeyRoy} or \cite[\S15.5]{DLMF}). The first is stated in~\cite[(15.5.15)]{DLMF} as 
\begin{equation}\label{DLMF.5.15}
(c-a-1)F(a,b;c;z)
+aF(a+1,b;c;z)
-(c-1)F(a,b;c-1;z) = 0
\end{equation}
and the second is~\cite[(15.5.16)]{DLMF}
\begin{equation}\label{DLMF.5.16}
c(1-z) F(a,b;c;z)
-c F(a-1,b;c;z)
+(c-b)z F(a,b;c+1;z)
= 0.
\end{equation}

Two related formulas involve the derivative of $F$: they appear as (2.5.6) and (2.5.7) in~\cite{AndrewsAskeyRoy}. 
\begin{equation} \label{key relation 1}
z \frac{d}{dz} F(a, b; c; z)  = b\big(F(a, b+1; c; z) - F(a, b; c; z))    
\end{equation}
\begin{equation} \label{key relation 2}
z \frac{d}{dz} F(a, b; c; z)  = (c-1)\big( F(a, b; c-1; z) - F(a, b; c; z)\big)  
\end{equation}

\begin{proof}[Proof of Lemma~\ref{lem:identity}] 
Introduce two more hypergeometric functions:
\begin{align*}
\tilde f(z) & =F(a,b+1;c;z);\\
\tilde g(z) & =F(a,b;c-1;z). 
\end{align*}
Interchanging the symmetric parameters $a$ and $b$ in~\eqref{DLMF.5.15}, we find that
\begin{equation} \label{15.5.15}
(c-1-b)h+b\tilde f -(c-1)\tilde g=0.    
\end{equation}
If we use~\eqref{DLMF.5.16} in a similar way, replacing $c$ by $c-1$, the result is   
\begin{equation}  \label{15.5.16.1}
(c-1)(1-z)\tilde g-(c-1)g+(c-1-a)zh=0.    
\end{equation}
Another consequence of~\eqref{DLMF.5.16} is 
\begin{equation}   \label{15.5.16.2}
c(1-z)\tilde f-ch+(c-a)zf=0.
\end{equation}

Recalling that $c-1 = a-b$, multiplying~\eqref{15.5.15} by $(1-z)$ and using ~\eqref{15.5.16.1} and ~\eqref{15.5.16.2} to eliminate $\tilde f$ and $\tilde g$, we obtain
\begin{equation} \label{fgh linear 1}
(c-1)h+(b-a)zh-\frac{c-a}{c}bzf-(c-1)g=0 
\end{equation}
which simplifies to 
\begin{equation}    \label{fgh linear 2}
g=(1-z)h+\frac{b(b-1)}{c(c-1)}zf.    
\end{equation}

Using ~\eqref{fgh linear 2} to eliminate $g$, we find that 
\begin{equation}  \label{clear g}
z(fg' - f'g) - cfg = (cz-c-z)fh-\frac{b(b-1)}{c}zf^2-z(1-z)f'h+z(1-z)fh'. 
\end{equation}
The derivative formulas ~\eqref{key relation 1}--\eqref{key relation 2} allow us to eliminate derivatives from~\eqref{clear g} by using the identities $zh'=b(\tilde f-h)$ and 
$zf'=c(\tilde f-f)$. This leads to
\begin{equation}  \label{fhf'h'}
z(fg' - f'g) - cfg = (bz-b-z)fh-\frac{b(b-1)}{c}z f^2+(1-z)\tilde f(bf-ch).
\end{equation}
Finally, we use~\eqref{15.5.16.2} to eliminate $\tilde f$ from~\eqref{fhf'h'}:
\begin{equation}\label{elim-F}
\begin{split}
  (1-z)\tilde f(bf-ch)
 &= \left( h - \frac{c-a}{c}zf\right)(bf-ch) 
\\ &= bfh -ch^2 - \frac{c-a}{c}bzf^2 + (c-a)zfh . 
\end{split} \end{equation}
Plugging~\eqref{elim-F} into~\eqref{fhf'h'} and using the relation $c-a = 1-b$, we arrive at   
\[z(fg' - f'g) - cfg = -ch^2\]
which is the desired identity~\eqref{wronskian}.
\end{proof}

\section{Hypergeometric Blaschke products}\label{sec:hyper-Blaschke}

\begin{definition}\label{def:hyper-Blaschke} A Blaschke product is called a \emph{hypergeometric Blaschke product} if it has the form $B(z) = p(z)/q(z)$ where $p(z) = F(-n,b;c;z)$ is a hypergeometric polynomial with all zeros contained in $\mathbb D$, and $q(z) = z^n \overline{p(1/\bar z)}$ is its conjugate-reciprocal polynomial. We use the notation $B(z) = B(-n, b; c;z)$ for such products.
\end{definition}

Our main focus will be on the two-parameter family of hypergeometric Blaschke products provided by Lemma~\ref{lem:zeros-disk}.

\begin{lemma}\label{lem:hyper-Blaschke} 
A hypergeometric Blaschke product can be written as 
\begin{equation}\label{hyper-Blaschke-recip}
B(-n,b;c;z) = 
(-1)^n \frac{(c)_n}{(b)_n}
\frac{F(-n,b;c;z)}{F(-n, 1-c-n; 1-b-n; z)}    
\end{equation}
provided that the parameters $b, c$ are real. 
\end{lemma}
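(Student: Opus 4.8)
The plan is to reduce~\eqref{hyper-Blaschke-recip} to a single identity between the coefficients of two polynomials of degree $n$, and then verify that coefficient identity by the Pochhammer reflection formula. Write $p(z) = F(-n,b;c;z) = \sum_{k=0}^n a_k z^k$, so that $a_k = \frac{(-n)_k (b)_k}{(c)_k\, k!}$. Because $b$ and $c$ are real, every $a_k$ is real, and hence the conjugate-reciprocal polynomial collapses to
\[
q(z) = z^n\,\overline{p(1/\bar z)} = \sum_{k=0}^n a_k z^{n-k} = \sum_{j=0}^n a_{n-j}\,z^{j};
\]
in other words, $q$ is obtained from $p$ simply by reversing the order of its coefficients. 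Since $B(-n,b;c;z) = p(z)/q(z)$ by Definition~\ref{def:hyper-Blaschke}, proving~\eqref{hyper-Blaschke-recip} is the same as showing
\[
q(z) = (-1)^n \frac{(b)_n}{(c)_n}\, F(-n,\,1-c-n;\,1-b-n;\,z).
\]

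Comparing the coefficient of $z^j$ on each side and using the defining series of the hypergeometric polynomial on the right, this reduces to the scalar identity
\[
a_{n-j} = (-1)^n \frac{(b)_n}{(c)_n}\, \frac{(-n)_j\,(1-c-n)_j}{(1-b-n)_j\, j!}, \qquad j=0,1,\dots,n.
\]
First I would record the elementary reflection formula $(\alpha)_{n-j} = (-1)^j (\alpha)_n / (1-\alpha-n)_j$, valid for any $\alpha$ and $0\le j\le n$; it follows by writing each Pochhammer symbol as a product and reversing the order of the factors. Applying this with $\alpha=-n$, $\alpha=b$, $\alpha=c$, and $\alpha=1$ rewrites each of the four factors in $a_{n-j} = \frac{(-n)_{n-j}(b)_{n-j}}{(c)_{n-j}\,(n-j)!}$ in terms of the index $j$, where one also uses $(-n)_n = (-1)^n n!$ and $(1)_{n-j} = (n-j)!$. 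Collecting the resulting powers of $-1$ and cancelling the common factor $n!$ then produces exactly the right-hand side above.

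The calculation is pure bookkeeping, so the point requiring care is not difficulty but well-definedness: one must check that the prefactor $(-1)^n (c)_n/(b)_n$ and the terminating series $F(-n,1-c-n;1-b-n;z)$ both make sense. Both reduce to the condition $(b)_n\ne 0$, since $1-b-n\in\{0,-1,\dots,1-n\}$ is equivalent to $b\in\{0,-1,\dots,1-n\}$, the same set on which $(b)_n$ vanishes. This condition holds automatically because $p$ has degree exactly $n$ with leading coefficient $a_n = (-1)^n (b)_n/(c)_n$. With these conventions the displayed coefficient identity is an equality of polynomials, and dividing $p$ by $q$ gives~\eqref{hyper-Blaschke-recip}.
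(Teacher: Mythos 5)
Your proof is correct, and it reaches the lemma by a more self-contained route than the paper. Both arguments share the same first step: since $b,c$ are real, the conjugate-reciprocal polynomial is just the coefficient reversal $q(z)=z^n p(1/z)$, so everything reduces to identifying $z^n F(-n,b;c;1/z)$ with $(-1)^n\frac{(b)_n}{(c)_n}F(-n,1-c-n;1-b-n;z)$. The paper disposes of this identification in one line by citing the fractional linear transformation \cite[(15.8.6)]{DLMF} (equation \eqref{DLMF15.8.6} in the text), whereas you prove its terminating case from scratch: reverse the summation index $k\mapsto n-j$ and apply the reflection formula $(\alpha)_{n-j}=(-1)^j(\alpha)_n/(1-\alpha-n)_j$ with $\alpha=-n,\,b,\,c,\,1$, using $(-n)_n=(-1)^n n!$ and $(1)_{n-j}=(n-j)!$; the bookkeeping checks out, including the sign $(-1)^n$ and the final inversion $1/(-1)^n=(-1)^n$ that produces \eqref{hyper-Blaschke-recip}. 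What your approach buys is independence from the literature and the observation that, for polynomials, the transformation formula is nothing deeper than reindexing a finite sum; what it costs is the computation, plus one caveat you should state more carefully: the reflection formula is not valid ``for any $\alpha$'' but only when $(1-\alpha-n)_j\ne 0$. That requirement does hold in all four of your applications --- for $\alpha=-n$ and $\alpha=1$ the denominators are $(1)_j$ and $(-n)_j$, while for $\alpha=b$ and $\alpha=c$ nonvanishing is equivalent to $(b)_n\ne 0$ and $(c)_n\ne 0$; you verify the first, and the second is automatic because Definition~\ref{def:hyper} requires $c\notin\{0,-1,\dots,1-n\}$ for $F(-n,b;c;z)$ to be defined at all --- but the hypothesis deserves explicit mention. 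Finally, your well-definedness discussion (that $(b)_n\ne 0$ is forced, since otherwise $p$ would have degree below $n$, making $q(0)=0$ and giving $B=p/q$ a pole in $\mathbb D$) addresses a point the paper passes over in silence, and is a worthwhile addition.
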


\begin{proof} Since the coefficients of the numerator $p$ of $B$ are real, its denominator is the reciprocal polynomial $q(z) = z^n p(1/z)$. Applying a fractional linear transformation of the variable in $F$, see ~\cite[(15.8.6)]{DLMF}, we obtain 
\begin{equation}\label{DLMF15.8.6}
z^n F(-n, b; c; 1/z) =
(-1)^n \frac{(b)_n}{(c)_n}  
F(-n, 1-c-n; 1-b-n; z)
\end{equation}
which proves~\eqref{hyper-Blaschke-recip}.
\end{proof}

For a general finite Blaschke product $B$, the values of $|B'|$ on $\mathbb T$ can be expressed as a rational function and also as a sum of several instances of the \emph{Poisson kernel}
\begin{equation}\label{def-Poisson-kernel}
P(a, z) = \frac{1-|a|^2}{|z-a|^2},\quad a\in \mathbb D, \ z\in \mathbb T.
\end{equation}

\begin{lemma} (\cite[Lemma 3.4]{Ellipses}, \cite[Corollary 3.4.9]{GarciaMashreghiRoss})  \label{lem:poisson}
For any finite Blaschke product $B$ and any $z\in \mathbb T$ we have 
\begin{equation}\label{poisson-identity}
|B'(z)| = \frac{zB'(z)}{B(z)} = \sum_{k=1}^n P(a_k, z)    
\end{equation}
where $a_1, \dots, a_n$ are the zeros of $B$. 
\end{lemma}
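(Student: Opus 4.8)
The plan is to reduce the identity to the case of a single Blaschke factor and then verify it by a direct computation on $\mathbb{T}$. Write $B(z) = \alpha \prod_{k=1}^n b_k(z)$ with $b_k(z) = (z-a_k)/(1-\overline{a_k}z)$. Since the unimodular constant $\alpha$ contributes nothing to the logarithmic derivative, the product rule gives $zB'(z)/B(z) = \sum_{k=1}^n z\,b_k'(z)/b_k(z)$, so it suffices to establish the single-factor identity $z\,b_a'(z)/b_a(z) = P(a,z)$ for $b_a(z) = (z-a)/(1-\overline a z)$ and every $z\in\mathbb{T}$, and then sum over $k$.

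First I would compute $b_a'(z) = (1-|a|^2)/(1-\overline a z)^2$, which yields $z\,b_a'(z)/b_a(z) = z(1-|a|^2)/\bigl[(1-\overline a z)(z-a)\bigr]$. The key geometric input is that on the circle $z\overline z = 1$, so $1-\overline a z = z(\overline z - \overline a) = z\,\overline{(z-a)}$. Substituting this turns the denominator into $z\,|z-a|^2$, the factor $z$ cancels, and what remains is exactly $P(a,z) = (1-|a|^2)/|z-a|^2$. Summing over $k$ then gives the middle identity $zB'(z)/B(z) = \sum_{k=1}^n P(a_k,z)$.

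It remains to identify this common value with $|B'(z)|$. Since $|z|=1$ and $|B(z)|=1$ for $z\in\mathbb{T}$, we immediately have $\bigl|zB'(z)/B(z)\bigr| = |B'(z)|$. The only point needing a word of justification — and the sole (mild) obstacle — is the removal of the absolute value: one must know that $zB'(z)/B(z)$ is a nonnegative real number, not merely equal to $|B'(z)|$ in modulus. But this is immediate from the representation just proved, since each Poisson kernel $P(a_k,z)$ is positive; hence the sum is real and positive and therefore coincides with its own modulus $|B'(z)|$. (Alternatively, one could reach the same conclusion by differentiating $B(z)\overline{B(z)}=1$ along $\mathbb{T}$, but the Poisson representation supplies positivity with no extra work.) This closes the chain of equalities $|B'(z)| = zB'(z)/B(z) = \sum_{k=1}^n P(a_k,z)$.
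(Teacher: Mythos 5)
Your proof is correct. Note that the paper itself offers no proof of this lemma: it is quoted as a known result, with citations to the literature, so there is nothing internal to compare against. Your argument --- splitting off the unimodular constant, computing the logarithmic derivative of a single factor $b_a(z)=(z-a)/(1-\overline{a}z)$, using $z\overline{z}=1$ to identify $z\,b_a'(z)/b_a(z)$ with the Poisson kernel $P(a,z)$, and then invoking positivity of the resulting sum to drop the modulus --- is the standard proof of this identity (essentially what the cited sources do), and every step checks out, including the final point that positivity, rather than mere equality of moduli, is what justifies $|B'(z)|=zB'(z)/B(z)$. One small remark: the positivity of each $P(a_k,z)$ rests on $a_k\in\mathbb{D}$ (so $1-|a_k|^2>0$ and $z\neq a_k$ for $z\in\mathbb{T}$), which holds by the definition of a finite Blaschke product; it is worth saying this explicitly, but it is not a gap.
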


\begin{theorem}\label{thm:extreme-values} Suppose $n\in \mathbb N$ and $\nu > -1$, $\nu \ne 0$. The hypergeometric Blaschke product $B(z): = B(-n, \nu+2; -n-\nu+1; z)$ has the following properties: 
\begin{equation}\label{extreme-1}
    M(B) = n+\nu, \quad m(B) = n/(\nu+1) \quad \text{if } \nu>0 ,
\end{equation}
\begin{equation}\label{extreme-2}
    M(B) = n/(\nu+1), \quad m(B) = n+\nu \quad \text{if } -1< \nu < 0 .
\end{equation}
Both extremes are attained within the set $\mathcal E := \{z\in \mathbb T\colon zB(z) = 1\}$. Specifically, $|B'(z)| = n+\nu$ holds on $\mathcal E\setminus \{1\}$ and $|B'(z)| = n/(\nu+1)$ holds when $z=1$. 
\end{theorem}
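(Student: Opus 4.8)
The plan is to compute $|B'(z)|$ on $\mathbb T$ using the Poisson kernel representation from Lemma~\ref{lem:poisson}, and to recognize the resulting sum as a hypergeometric quantity that can be evaluated on the set $\mathcal E$. Since $B = p/q$ with $q$ the conjugate-reciprocal of $p$, a finite Blaschke product, we have $|B'(z)| = \sum_{k=1}^n P(a_k, z)$ where $a_1,\dots,a_n$ are the zeros of $p$ (which lie in $\mathbb D$ by Lemma~\ref{lem:zeros-disk}). **First I would** look for a closed-form expression for this sum valid at every $z\in\mathbb T$. The natural route is to relate $zB'(z)/B(z)$ to the logarithmic derivative $p'/p - q'/q$ and then to exploit the nonlinear Wronskian identity of Lemma~\ref{lem:identity}: with $a=-n$, $b=\nu+2$, $c=-n-\nu+1$ (so that $c=a-b+1$ indeed holds), the functions $f = F(-n,\nu+3;-n-\nu+2;z)$, $g=F(-n,\nu+1;-n-\nu;z)$, $h=p=F(-n,\nu+2;-n-\nu+1;z)$ satisfy $z(fg'-f'g)=c(fg-h^2)$. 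This identity is what ties $|B'|$ to the relation we must exploit.

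**The key computation** is to evaluate $|B'(z)|$ on $\mathcal E=\{z\in\mathbb T: zB(z)=1\}$. On this set $B(z)=1/z$, so $q(z)=z^n\overline{p(1/\bar z)}$ combined with $B=p/q$ gives a concrete algebraic constraint $z\,p(z)=q(z)$, i.e.\ $z\,p(z)=z^n\overline{p(1/\bar z)}$. Using $|B'(z)|=zB'(z)/B(z)=z(p'/p-q'/q)$ and the reciprocal relation $q(z)=z^np(1/z)$ (legitimate since the coefficients are real by Lemma~\ref{lem:hyper-Blaschke}), I would reduce everything to the values of $p,p',q,q'$ at points of $\mathcal E$. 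On $\mathcal E\setminus\{1\}$ the defining equation forces a relation among the three contiguous functions $f,g,h$ evaluated at $z$; feeding this into the Wronskian identity~\eqref{wronskian} should collapse the expression for $|B'|$ to the single value $n+\nu$. The special point $z=1$ must be handled separately because $\mathcal E$ is described by an equation that degenerates there; here I expect to use the Chu--Vandermonde evaluation $p(1)$ (as in~\eqref{p(1)negative}) together with $q(1)=p(1)$ and explicit values of $p'(1),q'(1)$ to obtain $|B'(1)|=n/(\nu+1)$.

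**Once both values on $\mathcal E$ are pinned down,** it remains to show they are the global extrema, not merely attained values. Since $|B'|=\sum_k P(a_k,z)$ is a positive trigonometric rational function, and since Theorem~\ref{thm:main}(i) already constrains the pair $(M(B),m(B))$, I would argue as follows. By part~(i) applied to $B$ of degree $n$, any candidate pair must satisfy~\eqref{thm:main-ineq}; the two values $n+\nu$ and $n/(\nu+1)$ satisfy~\eqref{def:extremal-1} (resp.~\eqref{def:extremal-2}) with equality, so they sit exactly on the boundary of the admissible region. Thus if I can show these are genuinely the max and min — for instance, by verifying that $|B'|-(n+\nu)$ and $|B'|-n/(\nu+1)$ do not change sign, or by a direct count of critical points of $|B'|$ on $\mathbb T$ — the extremality and the case split between $\nu>0$ and $-1<\nu<0$ follow, the roles of $M$ and $m$ switching according to which of the two boundary values is larger.

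**The main obstacle** I anticipate is the algebraic bookkeeping that turns the constraint $z\,p(z)=q(z)$ on $\mathcal E$ into a usable relation among $f,g,h$ and then into the clean output $n+\nu$ via the Wronskian identity; matching the contiguity shifts so that the identity~\eqref{wronskian} applies with exactly the right parameters, and correctly handling the degenerate point $z=1$, is where the real work lies. A secondary difficulty is establishing that the values on $\mathcal E$ are global extremes rather than just attained: the sign-definiteness or critical-point argument must be made rigorous, and I expect to lean on the sharpness of~\eqref{thm:main-ineq} to confirm that no pair strictly inside the region can be realized by this particular $B$.
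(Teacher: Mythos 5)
Your overall skeleton (logarithmic derivative of $B=p/q$ plus the Wronskian identity of Lemma~\ref{lem:identity}, with Theorem~\ref{thm:main}(i) in the endgame) is the same as the paper's, but the key step is misconfigured. With $a=-n$ and $b=\nu+2$, the hypothesis $c=a-b+1$ of Lemma~\ref{lem:identity} forces $c=-n-\nu-1$, not $-n-\nu+1$, so the identity~\eqref{wronskian} does not apply to your triple $(f,g,h)$ at all. More fundamentally, you cast the numerator $p$ in the role of the middle function $h$; but since $|B'(z)|=z(p'q-q'p)/(pq)$ on $\mathbb T$, the Wronskian $z(fg'-f'g)$ must be formed from the numerator and denominator, i.e.\ $p$ and $q$ must occupy the slots of $f$ and $g$. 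The correct matching is $a=-n$, $b=\nu+1$, $c=-n-\nu$: then $f=F(-n,\nu+2;-n-\nu+1;z)=p$, while $g=F(-n,\nu;-n-\nu-1;z)$ is proportional to $q$ by the reciprocal transformation~\eqref{DLMF15.8.6}, and $h=F(-n,\nu+1;-n-\nu;z)$ is a \emph{third} polynomial, whose zeros lie on $\mathbb T$ by Proposition~\ref{zeros-circle} and which, through the linear relation $g=(1-z)h+\kappa zf$ of~\eqref{fgh-simplified} (with $\kappa$ the constant in~\eqref{hB1}), are exactly the points of $\mathcal E\setminus\{1\}$. Your proposal never identifies this third polynomial, and without it neither the description of $\mathcal E$ nor the collapse of $|B'|$ to $n+\nu$ can be carried out.

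The second gap is the one you yourself flagged as a ``secondary difficulty'': global extremality. Evaluating $|B'|$ only on $\mathcal E$ can at best show the values $n+\nu$ and $n/(\nu+1)$ are \emph{attained}; this gives one-sided bounds on $M(B)$ and $m(B)$, and Theorem~\ref{thm:main}(i) is consistent with $M(B)$ being strictly larger than both attained values, so ``leaning on the sharpness of~\eqref{thm:main-ineq}'' cannot by itself promote attained values to extremes. The paper closes this by making the identity global: for \emph{all} $z\in\mathbb T$, one has $n+\nu-|B'(z)|=\frac{(n+\nu)^2(n+\nu+1)}{\nu(\nu+1)}\,|h(z)|^2/|f(z)|^2$ (equation~\eqref{finally}, which uses $fg=\kappa z^n|f|^2$ and $h^2/z^n=|h|^2$ on $\mathbb T$), whence $\nu\bigl(n+\nu-|B'(z)|\bigr)\ge0$ everywhere, with equality exactly at the zeros of $h$. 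This sign-definite identity is what delivers one of the two extremes ($M$ for $\nu>0$, $m$ for $-1<\nu<0$); only then does Theorem~\ref{thm:main}(i), combined with $|B'(1)|=n/(\nu+1)$ (which the paper obtains from the residue identity~\eqref{ellipses2} rather than Chu--Vandermonde, though your direct computation at $z=1$ would also work), pin down the other. In short, the two places you identified as ``where the real work lies'' are precisely where your proposal is either incorrect (the parameter matching) or left unfilled (the global sign argument).
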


\begin{proof} Let  $f(z) = F(-n, \nu+2; -n-\nu+1; z)$ and $g(z) = F(-n, \nu; -n-\nu-1; z)$. Lemma~\ref{lem:hyper-Blaschke} shows that
\begin{equation}\label{hB1}
B(z) = \kappa \frac{f(z)}{g(z)}
,\quad 
\text{where }\kappa = (-1)^n \frac{(-n-\nu+1)_n}{(\nu+2)_n} 
= \frac{\nu(\nu+1)}{(n+\nu)(n+\nu+1)} .
\end{equation} 
For $z\in \mathbb T$,  Lemma~\ref{lem:poisson} yields 
\begin{equation}\label{logder B}
|B'(z)| = \frac{zB'(z)}{B(z)} = \frac{z(f'g-fg')}{fg}
\end{equation}
where the denominator can be rewritten as
\begin{equation}\label{rewrite denom}
f(z)g(z) = \kappa 
z^n f(z) \overline{f(1/\bar z)} = \kappa z^n |f(z)|^2.
\end{equation}
From~\eqref{logder B} it follows that
\begin{equation}\label{subtract M}
n+\nu - \frac{zB'(z)}{B(z)} 
 = \frac{(n+\nu)fg - z(f'g-fg')}{fg} .
\end{equation}
Using Lemma~\ref{lem:identity} with $a=-n$,  $b =\nu+1$, and $c = a-b+1 = -n-\nu$, we obtain
\begin{equation}\label{key_step}
(n+\nu)fg - z(f'g-fg') = (n+\nu) h^2
\end{equation}
where $h(z) = F(-n, b, c; z)$. Another useful identity is~\eqref{fgh linear 2} from the proof of Lemma~\ref{lem:identity}, which simplifies to 
\begin{equation}\label{fgh-simplified}
g = (1-z) h + \kappa z f.    
\end{equation}
Because of~\eqref{fgh-simplified} we have 
\begin{equation}\label{extremal-set}
\mathcal E = \{z\in \mathbb T\colon 
\kappa z f(z) = g(z)\} = \{z\in \mathbb T\colon 
(1-z) h(z) = 0\} .
\end{equation}
Thus, the set where $zB(z)=1$ consists of the zeros of polynomial $h$ with an extra point $1$.

By Proposition~\ref{zeros-circle}, all zeros of $h$ are on the unit circle $\mathbb T$. In other words, $h$ has the same zeros as its conjugate-reciprocal polynomial, which implies  $h(z) = \alpha z^n \overline{h(1/\bar z)}$ for some unimodular constant $\alpha$. The hypergeometric form of $h$ shows that its coefficients are positive, thus  $h(z) = z^n \overline{h(1/\bar z)}$. This leads to the identity 
\begin{equation}\label{h ge 0}
\frac{h(z)^2}{z^n} 
= \frac{z^n \overline{h(1/\bar z)} h(z) }{z^n} = |h(z)|^2,\quad z\in \mathbb T . 
\end{equation}
From~\eqref{rewrite denom}, ~\eqref{subtract M}, \eqref{key_step}, and \eqref{h ge 0} we conclude that
\begin{equation}\label{finally}
\begin{split}
n+\nu - \frac{zB'(z)}{B(z)}
& =\frac{(n+\nu)h(z)^2}{f(z)g(z)} \\
& = 
\frac{(n+\nu)h(z)^2}{\kappa  z^n |f(z)|^2}\\ & = 
\frac{(n+\nu)^2(n+\nu+1)}{\nu(\nu+1)}
\frac{|h(z)|^2}{|f(z)|^2}.
\end{split}
\end{equation} 
The identity~\eqref{finally} shows that for $z\in \mathbb T$,
\begin{equation}\label{extreme-B}
\nu( n + \nu - |B'(z)|) \ge 0
\end{equation}
with equality attained precisely at the zeros of $h$. The relation~\eqref{ellipses2} with $\lambda = 1$ implies that 
\[
1 = \sum_{z\in \mathcal E} \frac{1}{|B'(z)|+1}
= \frac{n}{n+\nu+1} + \frac{1}{|B'(1)| + 1}
\]
from where it follows that $|B'(1)| = n/(\nu+1)$.

To complete the proof of~\eqref{extreme-1}--\eqref{extreme-2}, we consider two cases. If $\nu>0$, then $M(B) = n+\nu $ by~\eqref{extreme-B}. The left-hand side of~\eqref{thm:main-ineq} implies that $m(B)\ge n/(\nu+1)$. Since $|B'(1)| = n/(\nu+1)$, it follows that $m(B) = n/(\nu+1)$.

If $-1<\nu<0$, then $m(B) = n+\nu$ by~\eqref{extreme-B}. The right-hand side of~\eqref{thm:main-ineq} implies that $M(B)\le n/(\nu+1)$. Since $|B'(1)| = n/(\nu+1)$, it follows that $M(B) = n/(\nu+1)$.
\end{proof}

\section{Blaschke products with prescribed extrema of the derivative} \label{sec:sharpness} 

In this section we prove Theorem~\ref{thm:main}~(ii), that is, the existence of Blaschke products $B$ with prescribed extrema of $|B'|$ on the unit circle. The proof requires a lemma.

\begin{lemma}\label{lem:monotonicity}
Suppose $a_1, \dots, a_n\in \mathbb D$ and that the number $A:=\max\{ |a_k| \colon k=1,\dots, n\}$ is positive. For $0<\lambda < 1/A $ consider the Blaschke product 
\[
B_{\lambda}(z) = \prod_{k=1}^n \frac{z - \lambda a_k}{1-\lambda \overline{a_k} z}
\] 
Then $M(B_{\lambda})$ is strictly increasing with $\lambda$, and $m(B_{\lambda})$ is strictly decreasing with $\lambda$. 
\end{lemma}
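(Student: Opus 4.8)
The plan is to reduce the statement to the behavior of a single harmonic function on concentric circles. Fix $0<\lambda<\mu<1/A$ and set $s=\lambda/\mu\in(0,1)$; since the zeros of $B_\lambda$ are exactly $s$ times the zeros of $B_\mu$, it suffices to show that contracting the zeros radially by the factor $s$ strictly decreases $M$ and strictly increases $m$, because this holds for every such pair $\lambda<\mu$. By Lemma~\ref{lem:poisson}, for $z=e^{i\alpha}\in\mathbb T$ we have $|B_\mu'(e^{i\alpha})|=\sum_k P(\mu a_k,e^{i\alpha})=:u(\alpha)$ and $|B_\lambda'(e^{i\alpha})|=\sum_k P(s\mu a_k,e^{i\alpha})$.

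The key step is the Poisson identity
\[
P(sb,e^{i\alpha})=\frac{1}{2\pi}\int_0^{2\pi}P(se^{i\alpha},e^{i\theta})\,P(b,e^{i\theta})\,d\theta,\qquad b\in\mathbb D,\ 0\le s<1,
\]
which asserts that the harmonic extension to the point $se^{i\alpha}$ of the boundary function $\theta\mapsto P(b,e^{i\theta})$ is again a Poisson kernel, namely $P(sb,\cdot)$. I would prove this by expanding the kernel in its Fourier series $P(b,e^{i\theta})=\sum_{j\in\mathbb Z}c_j e^{ij\theta}$, where $c_0=1$, $c_j=\overline b^{\,j}$ for $j\ge1$ and $c_{-j}=b^{\,j}$ for $j\ge1$, so that $|c_j|=|b|^{|j|}$; since harmonic extension to radius $s$ multiplies the $j$-th Fourier coefficient by $s^{|j|}$, it turns $|b|^{|j|}$ into $(s|b|)^{|j|}$, which are exactly the Fourier coefficients of $P(sb,\cdot)$. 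Summing this identity over the zeros $b=\mu a_k$ gives
\[
|B_\lambda'(e^{i\alpha})|=\widehat u(se^{i\alpha}),
\]
where $\widehat u$ is the harmonic extension of $u$ to $\mathbb D$. Hence $M(B_\lambda)=\max_{|\zeta|=s}\widehat u(\zeta)$ and $m(B_\lambda)=\min_{|\zeta|=s}\widehat u(\zeta)$, while $M(B_\mu)$ and $m(B_\mu)$ are the corresponding extrema on $|\zeta|=1$.

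It then remains to invoke the maximum principle. The function $\widehat u$ is positive and harmonic on $\mathbb D$, so $\max_{|\zeta|\le s}\widehat u=\max_{|\zeta|=s}\widehat u$ is nondecreasing in $s$, and likewise $\min_{|\zeta|=s}\widehat u$ is nonincreasing in $s$. To upgrade these to strict inequalities I would use that $\widehat u$ is nonconstant: its value at the center equals the mean of $u$, namely $\widehat u(0)=n$, whereas a nonconstant function attaining its extremum over $\overline{D}_\mu$ on an inner circle would, by the strong maximum principle, be constant on a subdisk and hence on all of $\mathbb D$. This is impossible, since the hypothesis $A>0$ forces the power sums $\sum_k a_k^{\,j}$ (the nonzero Fourier coefficients of $u$) not all to vanish, so $u$ is a nonconstant trigonometric polynomial. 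Combining these facts yields $M(B_\lambda)<M(B_\mu)$ and $m(B_\lambda)>m(B_\mu)$, which is precisely the asserted monotonicity.

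The main obstacle is isolating and proving the Poisson identity in the middle paragraph; once $|B_\lambda'|$ is recognized as the restriction of the \emph{fixed} harmonic function $\widehat u$ to the circle of radius $s=\lambda/\mu$, the monotonicity of $M$ and $m$ reduces to a routine application of the maximum principle, with the only remaining care being strictness, which rests on the non-degeneracy guaranteed by $A>0$.
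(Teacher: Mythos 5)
Your proof is correct and follows essentially the same route as the paper: both rest on the Poisson-kernel representation $|B_\lambda'(z)|=\sum_k P(\lambda a_k,z)$ together with the semigroup identity $P(sb,\cdot)=$ harmonic extension of $P(b,\cdot)$ evaluated at radius $s$, which identifies $|B_\lambda'|$ as the restriction of the harmonic extension of $|B_\mu'|$ to an interior circle; the paper then concludes by strict averaging of a nonconstant continuous function, which is exactly your strong-maximum-principle step in different words. The only differences are that you supply justifications (Fourier expansion of the kernel, nonvanishing of power sums via $A>0$) for facts the paper simply cites or asserts.
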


\begin{proof} Recall from~\eqref{poisson-identity} that for $z\in \mathbb T$,
\begin{equation}\label{poisson-relation}
|B_\lambda'(z)| = 
 \sum_{k=1}^n P(\lambda a_k, z).
\end{equation}
For any $\delta\in (0, 1)$, the semigroup property of the Poisson kernel implies
\[
P(\delta \lambda a_k, z) = \int_{\mathbb T} P(\lambda a_k, \zeta) P(\delta, \bar\zeta z)\,\frac{|d\zeta|}{2\pi}. 
\]
Summation over $k$ yields
\begin{equation}\label{averaging}
|B_{\delta\lambda}'(z)| =  
\int_{\mathbb T} |B_{\lambda}'(\zeta)|  P(\delta, \bar\zeta z)\,\frac{|d\zeta|}{2\pi}.
\end{equation}
Since $|B_{\lambda}'(\zeta)|$ is a nonconstant continuous function on $\mathbb T$, the weighted average on the right hand side of~\eqref{averaging} is strictly between $m(B_\lambda)$ and $M(B_\lambda)$. It follows that both $m(B_{\delta\lambda})$ and $M(B_{\delta\lambda})$ are strictly between $m(B_\lambda)$ and $M(B_\lambda)$, which proves both claims of monotonicity. 
\end{proof}

\begin{proof}[Proof of Theorem~\ref{thm:main}~(ii)] 
Suppose $n, m, M$ satisfy~\eqref{thm:main-ineq} and  ~\eqref{thm:main-ineq-trivial}. There are four cases to consider.

Case 1: $M=n$. Then~\eqref{thm:main-ineq} implies $m=n$. The Blaschke product $B_1(z)=z^n$ satisfies $M(B_1)=m(B_1) = n$. 

Case 2: $M>n$ and $m=n/(M-n+1)$, so that equality holds on the left side of~\eqref{thm:main-ineq}. Applying Theorem~\ref{thm:extreme-values} with $\nu=M-n$, we find a hypergeometric Blaschke product $B_2$ for which $M(B_2)=M$ and $m(B_2) =n/(M-n+1)$.

Case 3: $M>n$ and $m=n - 1 + n/M$, so that equality holds on the right side of~\eqref{thm:main-ineq}. Applying Theorem~\ref{thm:extreme-values} with $\nu =n/M - 1 = m-n$, we find a hypergeometric Blaschke product $B_3$ for which $M(B_3)=M$ and $m(B_3) = n - 1 + n/M$.  

Case 4: $M>n$ and strict inequalities hold on both sides of~\eqref{thm:main-ineq}. We handle this case by interpolating between the products $B_2$ and $B_3$ from the preceding cases. Recall that $M(B_2)=M(B_3)=M$ and $m(B_2) < m < m(B_3)$. Let $a_1, \dots, a_n$ be the zeros of $B_2$, and let $b_1, \dots, b_n$ be the zeros of $B_3$. Note that $a_k, b_k\ne 0$ since a hypergeometric function does not vanish at $z=0$. 

For $k=1, \dots, n$ choose some continuous curve $\gamma_k\colon [0, 1]\to \mathbb D\setminus \{0\}$ such that $\gamma_k(0) = a_k$ and $\gamma_k(1)=b_k$.  Define $\Lambda(t) = 1/\max_k |\gamma_k(t)|$. For $0<\lambda < \Lambda(t)$ let $B_{t,\lambda}$ be a Blaschke product  with zeros $\lambda \gamma_k(t)$, $k=1,\dots,n$. 

For each fixed $t$, the function $\lambda\mapsto M(B_{t,\lambda})$ is continuous and strictly increasing by virtue of Lemma~\ref{lem:monotonicity}. The relation~\eqref{poisson-relation} shows that  $M(B_{t,\lambda})\to n$ as $\lambda\to 0+$, and  $M(B_{t,\lambda})\to \infty$ as $\lambda\to \Lambda(t)-$, when at least one of the zeros approaches the boundary of $\mathbb D$. Therefore, there exists a unique value $\lambda(t)$ such that $M(B_{t,\lambda(t)}) = M$.  We claim that the function $t\mapsto \lambda(t)$ is continuous on $[0, 1]$. 

Fix $t\in [0, 1]$ and $\epsilon>0$. Choose some positive numbers $\alpha, \beta$ so that
\[
\max(\lambda(t) - \epsilon, 0)< \alpha < \lambda(t) < \beta < \min(\lambda(t) + \epsilon, \Lambda(t)) 
\]
By the strict monotonicity with respect to $\lambda$, 
\begin{equation}\label{up-low1}
M(B_{t, \alpha}) < M < M(B_{t,\beta}) 
\end{equation}
The continuity of $t\mapsto M(B_{t, \alpha})$ with respect to $t$ implies the existence of $\delta>0$ such that if $s\in [0, 1]$ and $|s-t|<\delta$, both inequalities~\eqref{up-low1} still hold with $t$ replaced by $s$. Therefore, by the monotonicity of $\lambda \mapsto M(B_{t, \lambda})$ for any such $s$ we have $\alpha<\lambda(s)<\beta$, which implies $|\lambda(s) - \lambda(t)| < 2\epsilon$. This proves the continuity of $\lambda$ with respect to $t$. 

In view of the continuity of $t\mapsto \lambda(t)$, the function $t\mapsto m(B_{t,\lambda(t)})$ is continuous and the intermediate value theorem provides $t\in (0, 1)$ such that $m(B_{t,\lambda(t)}) = m$, completing the proof of Theorem~\ref{thm:main}. \end{proof}

\section{Uniqueness of extremal Blaschke products}\label{sec:uniqueness} 

In this section we prove  Theorem~\ref{thm:uniqueness} concerning the uniqueness of Blaschke products $B$ with extreme values of $M(B)=\sup_{\mathbb T}|B'|$   and $m(B)=\inf_{\mathbb T}|B'|$, as introduced in Definition~\ref{def:extremal-Bp}.

\begin{lemma}\label{lem:preparation}
Suppose $B$ is an extremal Blaschke product of degree $n$. Let $\nu$ be as in~\eqref{nu-def}. Then there exists $\lambda \in \mathbb T$ and distinct points $z_0, z_1, \dots, z_{n}\in \mathbb T$ with the following properties:   
\begin{enumerate}[(a)]
    \item $z_k B(z_k) = \lambda$ for $k=0, \dots, n$;
    \item $|B'(z_0)| = n/(\nu+1)$;
    \item $|B'(z_k)| = n + \nu$ for $k=1, \dots, n$.
\end{enumerate}

\end{lemma}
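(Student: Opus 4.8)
The plan is to anchor $\lambda$ at a point where $|B'|$ already realizes one of its two extreme values, and then to read off the values of $|B'|$ at the remaining preimages from the equality case of the identity~\eqref{ellipses2}. Throughout, recall that $\widetilde B(z)=zB(z)$ is a Blaschke product of degree $n+1$ with $\widetilde B(0)=0$ and $|\widetilde B'(z)|=|B'(z)|+1$ on $\mathbb T$. Since $|\widetilde B'|\ge 1>0$ on $\mathbb T$, the restriction of $\widetilde B$ to $\mathbb T$ is a local diffeomorphism and a degree $n+1$ covering, so for every $\lambda\in\mathbb T$ the equation $zB(z)=\lambda$ has exactly $n+1$ \emph{distinct} solutions on $\mathbb T$; applying Lemma~\ref{lem:ellipses} to $\widetilde B$ then gives~\eqref{ellipses2}. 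This already secures property~(a) together with the distinctness of the points, for whichever $\lambda$ we select.

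I first treat the first kind, where $\nu>0$, so that $M(B)=n+\nu$ while $n/(\nu+1)=m(B)$ by~\eqref{nu-prop}. Since $|B'|$ is continuous on the compact circle, it attains its minimum $m(B)=n/(\nu+1)$ at some point, which I take as $z_0$; setting $\lambda=z_0B(z_0)$ and listing the $n+1$ distinct solutions $z_0,z_1,\dots,z_n$ of $zB(z)=\lambda$ yields~(a) and~(b) immediately. For~(c), I substitute $|B'(z_0)|=n/(\nu+1)$ into~\eqref{ellipses2}, which after simplification gives $\sum_{j=1}^n \frac1{|B'(z_j)|+1}=\frac{n}{n+\nu+1}$. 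Each inequality $|B'(z_j)|\le M(B)=n+\nu$ forces every summand to be at least $\frac1{n+\nu+1}$, so the sum is at least $\frac{n}{n+\nu+1}$; the equality just obtained then pins every summand to $\frac1{n+\nu+1}$, that is $|B'(z_j)|=n+\nu$ for $j=1,\dots,n$.

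The second kind, where $-1<\nu<0$, so that $m(B)=n+\nu$ and $n/(\nu+1)=M(B)$, is the mirror image: I take $z_0$ to be a maximizer of $|B'|$, anchor $\lambda=z_0B(z_0)$, and observe that now $|B'(z_j)|\ge m(B)=n+\nu$ reverses the one-sided bound to $\frac1{|B'(z_j)|+1}\le\frac1{n+\nu+1}$, while the same evaluation of~\eqref{ellipses2} again delivers the sum $\frac{n}{n+\nu+1}$ and pins every $|B'(z_j)|$ to $n+\nu$. The point is that the extremal relations~\eqref{def:extremal-1}--\eqref{def:extremal-2} are exactly what make these one-sided estimates tight, so there is no real analytic difficulty; the only thing to watch is the bookkeeping of which of $n/(\nu+1)$ and $n+\nu$ is the maximum and which is the minimum in each kind. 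I expect the one genuinely decisive step to be the realization that anchoring $\lambda$ at an extremizer converts the single global identity~\eqref{ellipses2} into an equality that propagates the extreme value $n+\nu$ to all of the remaining preimages at once.
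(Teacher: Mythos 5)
Your proof is correct and follows essentially the same route as the paper's: anchor $\lambda$ at an extremizer of $|B'|$, evaluate the identity~\eqref{ellipses2} there, and use the one-sided bound $|B'|\le n+\nu$ (resp.\ $|B'|\ge n+\nu$) to force equality in every remaining term of the sum. The only discrepancy is the boundary case $\nu=0$: the paper's first case is $\nu\ge 0$, while your strict split $\nu>0$ versus $-1<\nu<0$ omits $\nu=0$, though your first-kind argument applies there verbatim (indeed $|B'|\equiv n$ in that case, so the conclusion is immediate).
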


\begin{proof} Case 1: $\nu\ge 0 $. Then $M(B) = n+\nu$ and $m(B) = n/(\nu+1)$, according to~\eqref{nu-prop}. Therefore, there exists $z_0\in \mathbb T$ such that $|B'(z_0)| = n/(\nu+1)$. Let $\lambda = z_0B(z_0)$ and let $z_1, \dots, z_n$ be other solutions of the equation $zB(z) = \lambda$. The identity~\eqref{ellipses2} implies
\begin{equation}\label{ellipses2a}
\sum_{k=1}^{n} \frac{1}{|B'(z_k)| + 1} = 1 - \frac{1}{n/(\nu+1) + 1}  = 
 \frac{n}{n+\nu+1}
\end{equation} 
Since $|B'| \le n+\nu $ on $\mathbb T$, in order for~\eqref{ellipses2a} to hold, it is necessary to have $|B'(z_k)| = n+\nu $ for $k=1, \dots, n$. This concludes the proof of Case~1. 

Case 2: $-1<\nu < 0$, which implies $M(B) =n/(\nu+1)$ and $m(B) = n+\nu$ according to ~\eqref{nu-prop}. Pick $z_0\in \mathbb T$ so that $|B'(z_0)| = M(B) = n/(\nu+1)$. Then define $\lambda, z_1, \dots, z_n$ as in Case~1. The relation~\eqref{ellipses2a} still holds. Since $|B'|\ge n+\nu$ on $\mathbb T$, it follows that $|B'(z_k)| = n+\nu $ for $k=1, \dots, n$. 
\end{proof}

\begin{proof}[Proof of Theorem~\ref{thm:uniqueness}] Let $B$ be an extremal Blaschke product of degree $n$, and let $\nu $ be as in~\eqref{nu-def}. If $\nu = 0$, then $M(B) = m(B) = n$, which means $|B'|\equiv n$ on $\mathbb T$. Hence 
$B(z) = \alpha z^n$ for some $\alpha \in \mathbb T$. From now on, assume $\nu\ne 0$.

By replacing $B$ with $\alpha B(\beta z)$ for appropriate $\alpha, \beta \in \mathbb T$, we can make sure that the conclusion of Lemma~\ref{lem:preparation} holds with $z_0=1$ and $\lambda = 1$. In particular, $B(1) = 1$.  

Let $\widetilde{B}(z) = zB(z)$ and $\mathcal E = \{z_0, \dots, z_{n}\} = \widetilde B^{-1}(1)$. We can write $B$ as $B(z) = p(z)/q(z)$ where $p$ is a polynomial of degree $n$ (not necessarily monic) with all zeros in $\mathbb D$, and $q$ is its conjugate-reciprocal polynomial. We will usually omit the argument $z$ of $p$ and $q$ and other polynomials that appear below.  

For $z\in \mathbb T$ we have 
\begin{equation}\label{psipq}
n+\nu-|B'| = n+\nu-\frac{zB'}{B} = n+\nu-z\frac{p'q-q'p}{pq} = 
\frac{\psi}{pq}
\end{equation}
where 
\begin{equation}\label{psi-def}
 \psi = (n+\nu)pq-z(p'q-q'p)   
\end{equation}
is a polynomial of degree $2n$. Because of~\eqref{psipq}, the rational function $\psi/(pq)$ maps $\mathbb T$ onto a real interval with $0$ as an endpoint. By Lemma~\ref{lem:preparation} this function attains the extreme value $0$ at the points $z_1, \dots, z_n$, which therefore must be zeros of even order for $\psi$. Considering that $\deg \psi = 2n$, we conclude that $\psi =C_1 r^2$ where  $C_1$ is a constant and $r(z)=\prod_{k=1}^n (z-z_k)$.

Since $\widetilde{B}$ maps $\mathcal E$ to $1$, we have $zp= q$ on the set $\mathcal E$. Plugging this relation into~\eqref{psi-def}, we find 
\[\psi(z) = (n+\nu)zp^2-z^2pp'+zpq' = zp[(n+\nu)p-zp'+q'], \quad z\in \mathcal E.\]
Since $zp$ does not vanish on $\mathbb T$, the factor $(n+\nu)p-zp'+q'$ vanishes at $z_1, \dots, z_n$. But it has degree $n$, and therefore   
\begin{equation}   \label{first form}
(n+\nu)p-zp'+q'=C_2 r(z)    
\end{equation}
where $C_2$ is a constant.

Recalling that $\mathcal E = \{1, z_1, \dots, z_n\}$ is the zero set of $zp-q$ and using~\eqref{first form}, we arrive at     
\begin{equation}\label{zp-q}
zp- q=C_3(z-1)[(n+\nu)p-zp'+q']
\end{equation}
where $C_3$ is a constant. Comparison of the leading coefficients in~\eqref{zp-q} leads to $C_3=1/\nu$. Hence
\begin{equation}   \label{key identity}
\nu(zp- q) = (z-1)[(n+\nu)p-zp'+q' ].   
\end{equation}

The identity~\eqref{key identity} provides a linear differential equation relating the polynomials $p$ and $q$. Another such relation can be derived as follows. 

Since the points $z_1, \dots, z_n$ are  double roots of $\psi$, they are also roots of its derivative 
\begin{equation} \label{derivative}
\psi' = (n+\nu-1)p'q+(n+\nu+1)pq'-zp''q+zpq'' .
\end{equation}
Since $q=zp$ at $z_1, \dots, z_n$, plugging this relation in \eqref{derivative} shows that 
\begin{equation}\label{factor-p}
\psi'(z) = p[(n+\nu-1)zp'+(n+\nu+1)q'-z^2p''+zq''], \quad z\in \{z_1, \dots, z_n\}.
\end{equation}
Since $p$ does not vanish on $\mathbb T$, the second factor on the right hand side of~\eqref{factor-p}, which is
\begin{equation}\label{second form}
    (n+\nu-1)zp'+(n+\nu+1)q'-z^2p''+zq''
\end{equation}
must vanish at $z_1, \dots, z_n$.  But~\eqref{second form} is a polynomial of degree $n$ and therefore must be a constant multiple of $r$. The leading coefficient of~\eqref{second form} is $n\nu c_n $ 
where $c_n$ is the leading coefficient of $p$. The leading coefficient of the left hand side of~\eqref{first form} is $\nu c_n$. Thus, 
\begin{equation}  \label{two forms of product}
(n+\nu-1)zp'+(n+\nu+1)q'-z^2p''+zq'' = n[(n+\nu)p-zp'+q'] .
\end{equation}

Our next step is to combine~\eqref{key identity} and~\eqref{two forms of product}, eliminating $q$ and thus obtaining a differential equation for $p$. Specifically, we will show that $p$ satisfies the hypergeometric differential equation 
\begin{equation}  \label{differential equation}
z(1-z)p''-(n+\nu-1)p'-(\nu-n+3)zp'+n(\nu+2)p=0 .
\end{equation}

Expanding~\eqref{key identity} as 
\begin{equation}   \label{key identity reformulation}
z(1-z)p'+ nzp - (n+\nu)p + \nu q+(z-1)q'=0    
\end{equation}
and taking derivative on both sides, we get
\begin{equation} \label{key identity derivative}
z(1-z)p''-(n+\nu-1)p'+(n-2)zp'+np+(\nu+1)q'+(z-1)q''=0    .
\end{equation}
The system of two second-order differential equations~\eqref{two forms of product} and~\eqref{key identity derivative} can be written as  
\begin{align}  \label{system1}
(\nu+1)q'+zq'' & =n(n+\nu)p-(2n+\nu-1)zp'+z^2p'' =:\Phi \\      
(\nu+1)q'+(z-1)q'' &
\label{system2}
=-np+(n+\nu-1)p'-(n-2)zp'-z(1-z)p'' =: \Psi    
\end{align}
Subtracting~\eqref{system2} from~\eqref{system1} shows that $q'' = \Phi-\Psi$, and then~\eqref{system1} yields  $(\nu+1)q' = (1-z)\Phi  +z\Psi$. Differentiating the latter, we obtain
\[
(1-z)\Phi'  +z\Psi'  - \Phi + \Psi 
= (\nu+1)q'' = (\nu+1)(\Phi-\Psi) .
\]
Hence
\begin{equation}\label{diff-eq}
(1-z)\Phi'  +z\Psi'  + (\nu + 2)(\Psi-\Phi) = 0.
\end{equation} 
Recalling the definitions of $\Phi$ and $\Psi$~\eqref{system1}--\eqref{system2}, we see that~\eqref{diff-eq} is a linear differential equation for $p$.  When simplified, it becomes the hypergeometric equation ~\eqref{differential equation}.

At this point one could appeal to the classification of solutions of the hypergeometric equation~\cite[\S2.3]{AndrewsAskeyRoy} to conclude that $p$ must be a constant multiple of $F(-n,\nu+2;-n-\nu+1;z)$. But with this approach, the case when $-n-\nu+1$ is a negative integer has to be treated separately. To avoid this complication, we work directly with the coefficients of polynomial  $p(z)=\sum_{k=0}^n c_k z^k$. Plugging this sum into ~\eqref{differential equation} and equating the coefficient of $z^k$,  $0\le k\le n-1$, to zero, we find that
\[k(k+1)c_{k+1}-k(k-1)c_k -(n+\nu-1) (k+1)c_{k+1}-(\nu-n+3)kc_k+n(\nu+2)c_k=0\]
hence 
\begin{equation}\label{coeff-recursion}
c_{k+1} = \frac{(k-n)(k+\nu+2)}{(k+1)(k-n-\nu+1)}
c_{k}.
\end{equation} 
The denominator in~\eqref{coeff-recursion} is never zero because $\nu\in (-1, 0)\cup (0, \infty)$ and $k\le n-1$.  Comparing~\eqref{coeff-recursion} to the definition of hypergeometric function~\eqref{hyper-series}, we see that 
\[
p(z) = c_0 F(-n,\nu+2;-n-\nu+1;z).
\]
The normalization $B(1)=1$ implies that $c_0/\overline{c_0} = 1$, hence $c_0$ is real. Dividing $p$ by the real constant $c_0$ does not change the Blaschke product $B=p/q$ because the conjugate-reciprocal polynomial $q$ gets divided by the same constant. Thus, $B$ has the form claimed in Theorem~\ref{thm:uniqueness}. 
\end{proof} 

\section{Zeros of extremal Blaschke products}\label{sec:zeros}

Although Theorem~\ref{thm:uniqueness} provides an explicit formula for extremal Blaschke products, more insight can be gained from examples which unpack the definition of the hypergeometric function $F$ and display the structure of zeros and the behavior of $|B'|$ on the unit circle. 

\begin{example}\label{example:deg2} Recall from Corollary~\ref{cor:deg2} that for Blaschke products $B$ with $\deg B=2$, the inequality $M(B)\le 3$ implies $m(B)\ge 1$. By Theorem~\ref{thm:uniqueness}, there is a unique (up to rotation) Blaschke product with $\deg B = 2$,  $M(B)=3$, and $m(B)=1$, namely
\[
B(z) = \frac{6z^2 + 3z + 1}{z^2 + 3z + 6} .
\]
The zeros of $B$ are $(-3 \pm i\sqrt{15})/12$, which illustrates the fact that while extremal Blaschke products have relatively simple coefficients (positive rational numbers) their zeros do not have a simple form. 
\end{example}

To understand the role of the location of zeros, recall that the boundary values of $|B'|$ are given by the Poisson kernel sum~\eqref{poisson-identity} involving the zeros of $B$. In order to minimize $m(B)$ for a fixed value of $M(B)$, it is logical to distribute the zeros so they have a large gap, where small values of $|B'|$ are found, but are uniformly spaced otherwise, avoiding very large values of $|B'|$.   

Extremal products of the second kind, which maximize $m(B)$ for a given $M(B)$, have a less intuitive placement of zeros. One might think that the best way to keep the sum~\eqref{poisson-identity} nearly constant around the unit circle is to distribute the zeros $a_1, \dots, a_n$ uniformly on a circle concentric to $\mathbb T$. But this is not so. 

\begin{example}\label{example:deg15} Let $B_1$ be an extremal Blaschke product of the first kind with $n=15$ and $\nu=5$, which means $M(B_1) = 20$ and $m(B_1) = 2.5$. Figure~\ref{fig:deg15} (left) shows the zeros of $B_1$, with the unit circle for reference.

Let $B_2$ be an extremal Blaschke product of the second kind with $n=15$ and $\nu=-1/4$, which means $M(B_2) = 20$ and $m(B_2) = 14.75$. The zeros of $B_2$ are shown on the right in Figure~\ref{fig:deg15}. The principal difference is that $B_2$ has a positive zero. 
\end{example}

\begin{figure}[h]
    \centering
    \includegraphics*[trim={120 100 120 100}, clip, width=0.8\textwidth]{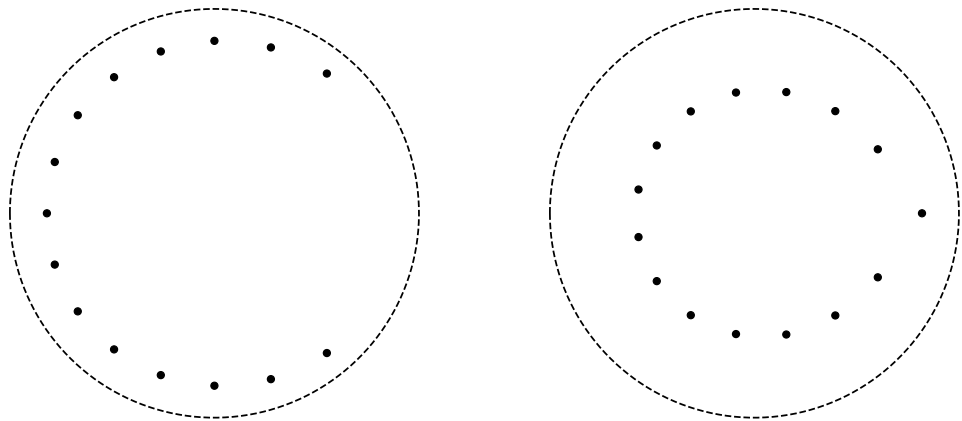}
    \caption{Zeros of the extremal Blaschke products of   Example~\ref{example:deg15}}
    \label{fig:deg15}
\end{figure}

\begin{remark} For $-1<\nu<0$ the polynomial $p(z) = F(-n, \nu+2; -n-\nu+1; z)$, which is the numerator of an extremal Blaschke products of the second kind, has a real root in the interval $(0, 1)$. Indeed, $p(0)=1$ and $p(1)<0$ by virtue of~\eqref{p(1)negative}.
\end{remark} 
 
The behavior of $|B'(e^{it})|$, $-\pi\le t\le \pi$, is shown on 
Figure~\ref{fig:deg15der} for both products introduced in Example~\ref{example:deg15}. Both plots share the vertical axis, demonstrating that $M(B_1)=M(B_2) = 20$ while the minimum values are quite different. In both cases the mean value of $|B'|$ on $\mathbb T$ is equal to $15$, the degree of the product. 
 
\begin{figure}[h]
    \centering
    \includegraphics*[trim={50 20 60 20}, clip, width=0.7\textwidth]{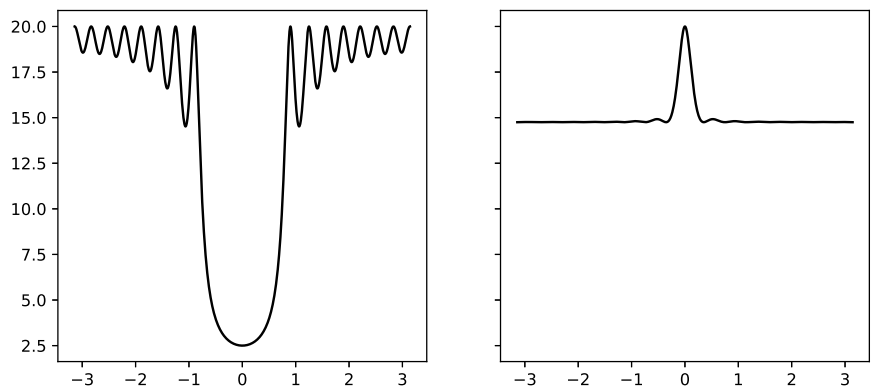}
    \caption{Derivative of the extremal Blaschke products of   Example~\ref{example:deg15}}
    \label{fig:deg15der}
\end{figure}
 
The fact that the mean value of $|B'|$ is constrained by the degree of $B$ explains why a symmetric placement of zeros  fails to maximize $m(B)$ for a given $M(B)$. For a Blaschke product $B$ with symmetric zeros, $|B'|$ attains its maximum $n$ times, and these multiple maxima must be offset by smaller values of $|B'|$ elsewhere. Indeed, an easy computation shows that a product with symmetric zeros,
\[
B(z) = \frac{z^n+a^n}{1 + a^n z^n}, \quad 0<a<1,
\]
 has 
 \[
|B'(z)| = n\frac{1-a^{2n}}{|z^n+a^n|}, 
\quad M(B) = n\frac{1+a^{n}}{1 - a^n}, \quad m(B) = n\frac{1-a^{n}}{1 + a^n}, 
 \]
 and therefore $m(B) = n^2/M(B)$, which is not extremal in the sense of Definition~\ref{def:extremal-Bp}.
 
 \section*{Acknowledgment}
 
 We thank Pamela Gorkin for bringing the papers~\cite{GauWu} and~\cite{GorkinRhoades} to our attention. 
 
\bibliographystyle{plain}
\bibliography{references.bib}

\end{document}